\theoremstyle{plain}
\newtheorem{theorem}{Theorem}
\newtheorem{corollary}{Corollary}
\newtheorem{proposition}{Proposition}
\newtheorem*{1'}{Theorem 1-Bessel}
\newtheorem*{P2'}{Proposition 2-Bessel}
\newtheorem*{P3'}{Proposition 3-Bessel}
\newtheorem*{P4'}{Proposition 4-Bessel}
\newtheorem*{C1'}{Corollary 1-Bessel}
\newtheorem*{2'}{Theorem 2-Bessel}
\newtheorem*{3'}{Theorem 3-Bessel}
\theoremstyle{remark}
\newtheorem*{Remark 1}{Remark 1}
\newtheorem*{Remark 2}{Remark 2}
\newtheorem*{Remark 3}{Remark 3}
\newtheorem*{Remark 4}{Remark 4}
\numberwithin{equation}{section}
\begin{document}

\title [Clustering of  numbers in permutations avoiding a pattern]
{Clustering of consecutive numbers in permutations avoiding a pattern of length three or avoiding a finite number of  simple patterns}

\author{Ross G. Pinsky}

\address{Department of Mathematics\\
Technion---Israel Institute of Technology\\
Haifa, 32000\\ Israel}
\email{ pinsky@math.technion.ac.il}

\urladdr{https://pinsky.net.technion.ac.il/}

\subjclass[2010]{60C05,05A05} \keywords{random permutation; pattern avoiding permutation; clustering; simple permutation; separable permutation }
\date{}

\begin{abstract}
For $\eta\in S_3$,  let $S_n^{\text{av}(\eta)}$ denote the set of  permutations in $S_n$ that avoid the pattern $\eta$, and let $E_n^{\text{av}(\eta)}$ denote the expectation
with respect to the uniform probability measure on $S_n^{\text{av}(\eta)}$.
For $n\ge k\ge2$ and $\tau\in S_k^{\text{av}(\eta)}$,
let $N_n^{(k)}(\sigma)$ denote the number of occurrences  of $k$ consecutive numbers appearing in $k$ consecutive positions in $\sigma\in S_n^{\text{av}(\eta)}$,
and let $N_n^{(k;\tau)}(\sigma)$ denote the number of
  such occurrences for which the order of the  appearance of the $k$ numbers  is the pattern $\tau$.
  We obtain explicit
  formulas for
$E_n^{\text{av}(\eta)}N_n^{(k;\tau)}$ and $E_n^{\text{av}(\eta)}N_n^{(k)}$, for all $2\le k\le n$, all $\eta\in S_3$ and all $\tau\in S_k^{\text{av}(\eta)}$.
These exact formulas then yield  asymptotic formulas
as $n\to\infty$ with $k$ fixed,  and as $n\to\infty$ with  $k=k_n\to\infty$.
We also obtain analogous results for
 $S_n^{\text{av}(\eta_1,\cdots,\eta_r)}$,
 the subset of $S_n$ consisting of permutations avoiding the patterns $\{\eta_i\}_{i=1}^r$, where $\eta_i\in S_{m_i}$, in the case that  $\{\eta_i\}_{i=1}^n$ are all simple permutations. A particular case of this is the set of separable permutations, which corresponds to $r=2$, $\eta_1=2413,\eta_2=3142$.

\end{abstract}

\maketitle
\section{Introduction and Statement of Results}\label{intro}
Let $k,n\in\mathbb{N}$ with $2\le k\le n$.
Let $P_n$ denote the uniform probability measure on the set $S_n$ of permutations of $[n]:=\{1,\cdots, n\}$ and denote   by $E_n$ expectations corresponding to $P_n$.  Denote a permutation $\sigma\in S_n$ by $\sigma=\sigma_1\sigma_2\cdots \sigma_n$.
The set of  $k$ consecutive numbers $\{l,l+1,\cdots, l+k-1\}\subset [n]$ appears
in a  set  of consecutive positions in the permutation if there exists an $m$ such that $\{l,l+1,\cdots, l+k-1\}=\{\sigma_m,\sigma_{m+1},\cdots, \sigma_{m+k-1}\}$.
Let $A_{n}^{k;l}\subset S_n$ denote the event that
 the set of  $k$ consecutive numbers $\{l, l+1,\cdots, l+k-1\}$ appears
in a  set  of consecutive positions.
It is immediate that for any $1\le l,m\le n-k+1$,
the probability that  $\{l,l+1,\cdots, l+k-1\}=\{\sigma_m,\sigma_{m+1},\cdots, \sigma_{m+k-1}\}$ is equal to $\frac{k!(n-k)!}{n!}$.
Thus,
\begin{equation}\label{uniformprob}
P_n(A_{n}^{k;l})=(n-k+1)\frac{k!(n-k)!}{n!}\sim\frac{k!}{n^{k-1}},\ \text{as}\ n\to\infty, \text{for}\ k\ge2.
\end{equation}
Let
$N_{n}^{(k)}=\sum_{l=1}^{n-k+1}1_{A_{n}^{k;l}}$ denote the number of sets of $k$ consecutive numbers
appearing in sets of consecutive positions, and let $A_{n}^k=\cup_{l=1}^{n-k+1}A_{n}^{k;l}$ denote the event that there exists a set of   $k$ consecutive numbers
appearing in a  set of consecutive positions.
Then
\begin{equation}\label{uniformexpect}
E_nN_{n}^{(k)}=(n-k+1)^2\frac{k!(n-k)!}{n!}\sim\frac{k!}{n^{k-2}},\ \text{as}\ n\to\infty,\ \text{for}\ k\ge2.
\end{equation}
Using inclusion-exclusion along with \eqref{uniformprob},
it is not hard to show that
\begin{equation}\label{uniformtotalprob}
P_n(A_{n}^k)\sim\frac{k!}{n^{k-2}},\ \text{as}\ n\to\infty,\ \text{for}\ k\ge3.
\end{equation}
It follows from \eqref{uniformexpect}  (or from \eqref{uniformtotalprob}) that for $k\ge3$, the sequence $\{N^{(k)}_{n}\}_{n=1}^\infty$ converges to zero in probability. On the other hand,
  $\{N^{(2)}_{n}\}_{n=1}^\infty$ converges in distribution to a Poisson random variable with parameter $2$. This result
 on the clustering of consecutive numbers in permutations
 goes back over 75 years; see
\cite{W}, \cite{K}.

In this article, we study the clustering of consecutive numbers in permutations avoiding a pattern of length three, as well as in permutations that avoid a fixed number of patterns, all of which are \it simple,\rm\ the definition of which is given below.
We recall the definition of pattern avoidance for permutations.
If $\sigma=\sigma_1\sigma_2\cdots\sigma_n\in S_n$ and $\eta=\eta_1\cdots\eta_m\in S_m$, where $2\le m\le n$,
then we say that $\sigma$ contains $\eta$ as a pattern if there exists a subsequence $1\le i_1<i_2<\cdots<i_m\le n$ such
that for all $1\le j,k\le m$, the inequality $\sigma_{i_j}<\sigma_{i_k}$ holds if and only if the inequality $\eta_j<\eta_k$ holds.
If $\sigma$ does not contain $\eta$, then we say that $\sigma$ \it avoids\rm\ $\eta$.
We denote by $S_n^{\text{av}(\eta)}$ the set of permutations in $S_n$ that avoid $\eta$.
If $n<m$, we define $S_n^{\text{av}(\eta)}=S_n$. We denote by
$P_n^{\text{av}(\eta)}$ the uniform probability measure on $S_n^{\text{av}(\eta)}$, and denote by
$E_n^{\text{av}(\eta)}$ expectations corresponding to $P_n^{\text{av}(\eta)}$.

The main results in this paper concern permutations avoiding a pattern of length three.
Let $\eta\in S_3$, $k\ge 2$ and $\tau\in S_k^{\text{av}(\eta)}$.
For $\sigma\in S_n^{\text{av}(\eta)}$, with $n\ge k$,
let $N_n^{(k;\tau)}(\sigma)$ denote the number of occurrences  of $k$ consecutive numbers appearing in $k$ consecutive positions in $\sigma$,  and such that the order of their appearance is the pattern $\tau$.
(Such an occurrence is defined by the existence of
  $1\le l,m\le n-k+1$ such that
 $\{l, l+1,\cdots, l+k-1\}=\{\sigma_m,\cdots,\sigma_{m+k-1}\}$ and in addition,
 $\tau_i=\sigma_{m+i-1}-(l-1)$, $i=1,\cdots, k$.) Let
 $N_n^{(k)}(\sigma)$ denote the number of occurrences of $k$ consecutive numbers appearing in $k$ consecutive positions in $\sigma$, without regard to the order of their appearance;
 that is,  $N_n^{(k)}(\sigma)=\sum_{\tau\in S_k^{\text{av}(\eta)}}N_n^{(k;\tau)}(\sigma)$.
 We obtain explicit  formulas for
$E_n^{\text{av}(\eta)}N_n^{(k;\tau)}$ and $E_n^{\text{av}(\eta)}N_n^{(k)}$, for all $2\le k\le n$, all $\eta\in S_3$ and all $\tau\in S_k^{\text{av}(\eta)}$.
(Of course, $P_n^{\text{av}(\eta)}(N_n^{(k;\tau)}=0)=1$, if $\tau\in S_k-S_k^{\text{av}(\eta)}$.)
These exact formulas then yield  asymptotic formulas
as $n\to\infty$ with $k$ fixed,  and as $n\to\infty$ with  $k=k_n\to\infty$.
We also obtain a law of large numbers for $N_n^{(k;\tau)}$ and $N_n^{(k)}$ when $\eta\in S_3-\{123,321\}$.
It turns out that for $\eta\in S_3-\{123,321\}$ and all $\tau\in S_k$, $E_n^{\text{av}(\eta)}N_n^{(k;\tau)}$ grows linearly in $n$.
However, for each  $\eta\in\{123,321\}$,
there is one distinguished $\tau\in S_k$ for which
 $E_n^{\text{av}(\eta)}N_n^{(k;\tau)}$ has linear growth in $n$, while for all other $\tau\in S_k$,
$E_n^{\text{av}(\eta)}N_n^{(k;\tau)}$ converges to a positive constant.

Although there are six permutations $\eta$ in $S_3$, it suffices to consider just two of them---one from $\{132, 213,231,312\}$ and one from $\{123,321\}$.
Indeed, recall that the \it reverse\rm\  of a permutation $\sigma=\sigma_1\cdots\sigma_n$ is the permutation $\sigma^{\text{rev}}:=\sigma_n\cdots\sigma_1$,
and the \it complement\rm\ of $\sigma$ is the permutation
$\sigma^{\text{com}}$ satisfying
$\sigma^{\text{com}}_i=n+1-\sigma_i,\ i=1,\cdots, n$.
Let $\sigma^{\text{rev-com}}$ denote the permutation obtained by applying reversal and then complementation to $\sigma$ (or equivalently, vice versa).
It is immediate that the quantity  $E_n^{\text{av}(\eta)}N_n^{(k;\tau)}$ remains unchanged if each of $\eta$ and $\tau$ is replaced by its reversal, or by its complementation, or by its
complementation-reversal. Thus, it suffices to consider, say $231$ and $321$, since
 the three permutations $132$, $213$ and $312$ are obtained from $231$ respectively
by reversal, complementation and complementaion-reversal, and the permutation $123$ is obtained from $321$ by reversal.
We will prove  our results for the patterns $231$ and $321$, but we state them  in complete generality.

Denote the $n$th Catalan number by $C_n$: $C_n=\frac1{n+1}\binom{2n}n$. As is well known, $|S_n^{\text{av}(\eta)}|=C_n$, for all $\eta\in S_3$ and all $n\in\mathbb{N}$ \cite{B,SS}.
\begin{theorem}\label{nonmonthm}
Let $\eta\in\{132,213,231,312\}$.

\noindent i. Let $2\le k\le n$ and define
$\tau^*(\eta)\in S_k^{\text{av}(\eta)}$ by
\begin{equation}\label{*}
\tau^*(\eta)=\begin{cases} k\cdots 1,\ \text{if}\ \eta=231\ \text{or}\ 312;\\ 1\cdots k,\ \text{if}\ \eta=132\ \text{or}\ \eta=213.\end{cases}
\end{equation}
For $\tau\in S_k^{\text{av}(\eta)}$,
\begin{equation}\label{nonmonformula}
E_n^{\text{av}(\eta)}N_n^{(k;\tau)} =\begin{cases} \frac{(n-k+2)C_{n-k+1}}{2C_n}, \text{if}\ \tau\neq \tau^*(\eta);\\
\frac{(n-k+3)C_{n-k+2}}{2C_n}-\frac{(n-k+2)C_{n-k+1}}{C_n}, \text{if}\ \tau=\tau^*(\eta).
\end{cases}.
\end{equation}
Also,
\begin{equation}\label{WLLN}
\lim_{n\to\infty}\frac{N_n^{(k;\tau)}}{E_n^{\text{av}(\eta)}N_n^{(k;\tau)}}=1\ \text{in probability}.
\end{equation}
\noindent ii.
\begin{equation}\label{fullnonmon}
E_n^{\text{av}(\eta)}N_n^{(k)}=\frac{C_{n-k+1}}{2C_n}\Big((n-k+2)C_k+n-k\Big).
\end{equation}
\end{theorem}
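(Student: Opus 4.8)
The plan is to prove everything for $\eta=231$, the other three patterns following from the reversal/complementation invariances already recorded. Fix a block of values $B_l=\{l,\dots,l+k-1\}$ with $1\le l\le n-k+1$. In any given $\sigma$ the positions occupied by these values are determined, so $B_l$ either does or does not sit in consecutive positions, and when it does its internal pattern is forced; hence each $l$ contributes at most one occurrence. Thus $N_n^{(k;\tau)}=\sum_{l=1}^{n-k+1}1_{\{B_l\text{ in consecutive positions with pattern }\tau\}}$, and by linearity
\begin{equation*}
E_n^{\mathrm{av}(231)}N_n^{(k;\tau)}=\frac1{C_n}\sum_{l=1}^{n-k+1}a_{n,k}(l,\tau),
\end{equation*}
where $a_{n,k}(l,\tau)$ counts the $\sigma\in S_n^{\mathrm{av}(231)}$ in which $B_l$ occupies consecutive positions with internal pattern $\tau$. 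The whole of part i reduces to evaluating $a_{n,k}(l,\tau)$.

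The key step is a contraction/inflation dictionary: collapsing the block $B_l$ to one symbol sends such a $\sigma$ bijectively to a pair $(\widehat\sigma,\tau)$, where $\widehat\sigma\in S_{n-k+1}$ is $\sigma$ with the block contracted and the contracted symbol carries value $l$. I would then prove the inflation lemma: $\sigma$ avoids $231$ if and only if $\widehat\sigma$ avoids $231$, $\tau$ avoids $231$, and, \emph{unless} $\tau$ is the decreasing pattern, the value-$l$ symbol of $\widehat\sigma$ has no smaller value to its right. This is a short case analysis of an occurrence of $231$ in $\sigma$ by how many of its three entries lie in the block: zero or one entry gives an occurrence in $\widehat\sigma$, three entries give an occurrence in $\tau$, and the only genuinely new occurrences use exactly two block entries in increasing order together with a smaller entry lying to the right of the block. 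Such a configuration is available precisely when $\tau$ has an ascent, i.e. $\tau\ne\tau^*(231)=k\cdots1$, and some value below the block sits to the right of the value-$l$ symbol. This dichotomy is exactly what produces the special role of $\tau^*$ in \eqref{*}.

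For $\tau=\tau^*$ the side condition is vacuous, so $a_{n,k}(l,\tau^*)=C_{n-k+1}$ for every $l$, and summing gives $(n-k+1)C_{n-k+1}/C_n$; this agrees with the expression in \eqref{nonmonformula} after one application of the Catalan recurrence $(M+2)C_{M+1}=2(2M+1)C_M$ with $M=n-k+1$. For $\tau\ne\tau^*$ one has $a_{n,k}(l,\tau)=g(N,l)$ with $N=n-k+1$, where $g(N,l)$ is the number of $\rho\in S_N^{\mathrm{av}(231)}$ in which all of $1,\dots,l-1$ precede $l$. The crucial observation is that $\sum_{l=1}^N g(N,l)=\sum_{\rho}\#\{l:\text{all values }<l\text{ precede }l\text{ in }\rho\}$, and for fixed $\rho$ this inner count is exactly the number of left-to-right maxima of $\rho^{-1}$. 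Since inversion carries $S_N^{\mathrm{av}(231)}$ onto $S_N^{\mathrm{av}(312)}$, the sum equals the total number of left-to-right maxima over $S_N^{\mathrm{av}(312)}$; grading by the Narayana numbers $\mathrm{Nar}(N,m)$ gives $\sum_m m\,\mathrm{Nar}(N,m)=\binom{2N-1}{N-1}=\tfrac{N+1}2C_N$, which is the desired value $(n-k+2)C_{n-k+1}/2$. Part ii \eqref{fullnonmon} then follows by summing \eqref{nonmonformula} over the $C_k$ patterns $\tau\in S_k^{\mathrm{av}(231)}$ (exactly one being $\tau^*$) and simplifying.

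For the law of large numbers \eqref{WLLN} with $k$ fixed I would use the second moment method: since $E_n^{\mathrm{av}(231)}N_n^{(k;\tau)}$ grows linearly in $n$, it suffices to show $\mathrm{Var}(N_n^{(k;\tau)})=o(n^2)$ and invoke Chebyshev. This requires estimating $E(N_n^{(k;\tau)})^2$ by counting pairs of value-blocks $B_l,B_{l'}$ simultaneously occupying consecutive positions; a two-block version of the contraction above expresses the joint count through Catalan numbers, the leading term cancels against $\big(E_n^{\mathrm{av}(231)}N_n^{(k;\tau)}\big)^2$, and an $O(n)$ remainder survives. I expect the genuine obstacle to be exactly this covariance estimate: handling overlapping or adjacent blocks and checking that inflating two marked points creates no uncontrolled $231$'s is more delicate than the single-block lemma, whereas the expectation formulas themselves rest essentially on the contraction lemma together with the left-to-right-maxima identity $\sum_{l=1}^N g(N,l)=\tfrac{N+1}2C_N$.
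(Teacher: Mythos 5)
Your derivation of the expectation formulas is correct, but it takes a genuinely different route from the paper. You compute $E_n^{\mathrm{av}(231)}N_n^{(k;\tau)}$ term by term via a contraction/inflation bijection: collapse the value block $\{l,\dots,l+k-1\}$ to a single symbol and characterize exactly when the inflation stays $231$-avoiding (always when $\tau=k\cdots1$; otherwise precisely when no value below $l$ lies to the right of the contracted symbol). Your case analysis of a $231$-occurrence by how many entries it takes from the block is sound --- in particular the ``two block entries plus an earlier outside entry'' case is correctly ruled out because outside values cannot lie strictly between block values --- and the resulting counts $(n-k+1)C_{n-k+1}$ for $\tau=\tau^*$ and $\sum_l g(N,l)=\binom{2N-1}{N-1}=\tfrac{N+1}{2}C_N$ for $\tau\neq\tau^*$ reproduce \eqref{nonmonformula} exactly (I checked the algebra and small cases). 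The paper instead conditions on the position $j$ of the maximal value $n$, derives the distributional recursion of Proposition \ref{propcond}, and solves the resulting Catalan-weighted recursion by generating functions. Your approach is closer in spirit to the paper's own proofs of Theorem \ref{monthm} and Theorem \ref{multipatternthm} (which use precisely such a contraction), and it is arguably more transparent for the expectation; its one extra ingredient is the left-to-right-maxima/Narayana identity, which you should either prove (e.g.\ by the standard first-return decomposition) or cite, since it carries the whole weight of the $\tau\neq\tau^*$ case.

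For \eqref{WLLN}, however, your proposal is incomplete where the paper is not. You correctly reduce to $\mathrm{Var}(N_n^{(k;\tau)})=o(n^2)$ and Chebyshev, but the two-block covariance estimate is only sketched, and you yourself flag it as the genuine obstacle. This is where the paper's generating-function route pays off: the same conditional decomposition of Proposition \ref{propcond} applies verbatim to $(N_n^{(k;\tau)})^2$, yielding a recursion for the second moment whose generating function $R^{(k;\tau)}(t)$ is solved in closed form (Section \ref{varproof}), giving $r_n^\tau\sim n^2/4^{2k-1}=(E_n^{\mathrm{av}(231)}N_n^{(k;\tau)})^2(1+o(1))$ with no case analysis of overlapping or adjacent blocks. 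If you want to stay purely bijective, you would need to carry out the simultaneous contraction of two value blocks, including the degenerate configurations, which is doable but substantially more delicate than the single-block lemma; as written, this part of your argument is a gap.
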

Also,
\begin{equation}\label{WLLNfull}
\lim_{n\to\infty}\frac{N_n^{(k)}}{E_n^{\text{av}(\eta)}N_n^{(k)}}=1\ \text{in probability}.
\end{equation}
\bf\noindent Remark.\rm\ It is easy to check that $E_n^{\text{av}(\eta)}N_n^{(k;\tau)}$ is larger for $\tau=\tau^*(\eta)$ than for $\tau\neq\tau^*(\eta)$,
for all $n>k$. (For $n=k$, they are both of course equal to $\frac1{C_k}$.)

\medskip

Using the fact that $C_n\sim\frac{4^n}{\sqrt\pi n^\frac32}$,
the following corollary of Theorem \ref{nonmonthm} follows by straightforward calculation.

\begin{corollary}\label{nonmoncor}
Let $\eta\in\{132,213,231,312\}$ and let $\tau^*(\eta)$ be as in \eqref{*}.

\noindent i. Let $\tau^*(\eta)\neq \tau\in S_k^{\text{av}(\eta)}$. Then
\begin{equation}\label{asymplim}
\begin{aligned}
&\lim_{n\to\infty}\frac1nE_n^{\text{av}(\eta)}N_n^{(k;\tau)}=\frac1{2\cdot4^{k-1}};\\
&E_n^{\text{av}(\eta)}N_n^{(k_n;\tau)}\sim\frac n{2\cdot4^{k_n-1}},\ \text{if}\ k_n=o(n).
\end{aligned}
\end{equation}
ii. Let $\tau=\tau^*(\eta)$. Then
\begin{equation}\label{asymplim*}
\begin{aligned}
&\lim_{n\to\infty}\frac1nE_n^{\text{av}(\eta)}N_n^{(k;\tau)}=\frac1{4^{k-1}};\\
&E_n^{\text{av}(\eta)}N_n^{(k_n;\tau)}\sim\frac n{4^{k_n-1}},\ \text{if}\ k_n=o(n).
\end{aligned}
\end{equation}
\noindent iii.
\begin{equation}\label{asymplimalltau}
\begin{aligned}
&\lim_{n\to\infty}\frac1nE_n^{\text{av}(\eta)}N_n^{(k)}=\frac{C_k+1}{2\cdot 4^{k-1}};\\
&E_n^{\text{av}(\eta)}N_n^{(k_n)}\sim\frac2{\sqrt\pi}\frac n{k_n^\frac32},\ \text{if}\  \lim_{n\to\infty}k_n=\infty\ \text{and}\ k_n=o(n).
\end{aligned}
\end{equation}
\end{corollary}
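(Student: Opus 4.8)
The plan is to substitute the exact formulas from Theorem \ref{nonmonthm} and feed in the Catalan asymptotic $C_m\sim\frac{4^m}{\sqrt\pi\,m^{3/2}}$; everything then reduces to estimating ratios of the form $\frac{C_{n-k+j}}{C_n}$. The single fact doing all the work is that for each fixed $j$,
\begin{equation*}
\frac{C_{n-k+j}}{C_n}\sim 4^{-(k-j)}\left(\frac{n}{n-k+j}\right)^{3/2},\qquad n\to\infty,
\end{equation*}
together with the observation that the correction factor $\left(\frac{n}{n-k+j}\right)^{3/2}$ tends to $1$ both when $k$ is held fixed and when $k=k_n=o(n)$, since in that case $\frac{n}{n-k_n+j}\to1$.

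For part (i) I would write $\frac1n E_n^{\text{av}(\eta)}N_n^{(k;\tau)}=\frac{n-k+2}{2n}\cdot\frac{C_{n-k+1}}{C_n}$; the prefactor tends to $\frac12$ and the Catalan ratio tends to $4^{-(k-1)}$, giving the limit $\frac1{2\cdot4^{k-1}}$. For the regime $k_n=o(n)$ the same two factors give $\frac{n-k_n+2}{2}\sim\frac n2$ and $\frac{C_{n-k_n+1}}{C_n}\sim 4^{-(k_n-1)}$, yielding the stated equivalent. Part (ii) is identical in spirit: in the two-term expression for $\tau=\tau^*(\eta)$, the first term contributes $\frac12\cdot4^{-(k-2)}=\frac{2}{4^{k-1}}$ to $\lim_n\frac1n E_n^{\text{av}(\eta)}N_n^{(k;\tau)}$ and the second term contributes $-\frac1{4^{k-1}}$, so the difference is $\frac1{4^{k-1}}$, and the $o(n)$ regime is handled the same way.

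Part (iii) is the only place where a second instance of the Catalan asymptotic enters. Writing
\begin{equation*}
\frac1n E_n^{\text{av}(\eta)}N_n^{(k)}=\frac{C_{n-k+1}}{2C_n}\cdot\frac{(n-k+2)C_k+n-k}{n},
\end{equation*}
the fixed-$k$ limit follows because $\frac{C_{n-k+1}}{C_n}\to4^{-(k-1)}$ and $\frac{(n-k+2)C_k+n-k}{n}\to C_k+1$, producing $\frac{C_k+1}{2\cdot4^{k-1}}$. For the double limit $k_n\to\infty$ with $k_n=o(n)$ I would first note that, since $C_{k_n}\to\infty$, the bracket satisfies $(n-k_n+2)C_{k_n}+n-k_n\sim(n-k_n+2)C_{k_n}\sim n\,C_{k_n}$; combining this with $\frac{C_{n-k_n+1}}{2C_n}\sim\frac12\,4^{-(k_n-1)}$ and now invoking $C_{k_n}\sim\frac{4^{k_n}}{\sqrt\pi\,k_n^{3/2}}$ gives $E_n^{\text{av}(\eta)}N_n^{(k_n)}\sim\frac12\,4^{-(k_n-1)}\cdot n\cdot\frac{4^{k_n}}{\sqrt\pi\,k_n^{3/2}}=\frac{2}{\sqrt\pi}\,\frac{n}{k_n^{3/2}}$.

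There is essentially no serious obstacle, as the corollary is a direct unwinding of the exact formulas; the one point requiring a moment's care is the role of the correction factor $\left(\frac{n}{n-k+j}\right)^{3/2}$ in the $k_n\to\infty$ regimes, where one must confirm that it stays bounded and tends to $1$ under the hypothesis $k_n=o(n)$, and, in part (iii), that the additive term $n-k_n$ is genuinely negligible against $(n-k_n+2)C_{k_n}$, which uses precisely the divergence $C_{k_n}\to\infty$ guaranteed by $k_n\to\infty$.
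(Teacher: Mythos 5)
Your proposal is correct and is exactly the "straightforward calculation" the paper invokes: substitute the exact formulas of Theorem \ref{nonmonthm} and use $C_m\sim 4^m/(\sqrt{\pi}\,m^{3/2})$ to evaluate the Catalan ratios, checking that the correction factors tend to $1$ when $k_n=o(n)$. The only point worth making explicit is that in part (ii) the two leading terms are $\tfrac{2n}{4^{k_n-1}}$ and $\tfrac{n}{4^{k_n-1}}$, so their difference is of the same order and no cancellation spoils the asymptotic equivalence (indeed the exact expression simplifies to $(n-k+1)C_{n-k+1}/C_n$).
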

\noindent \bf Remark.\rm\ From \eqref{asymplimalltau}, it follows that $E_n^{\text{av}(\eta)}N_n^{(k_n)}$ remains bounded away from zero when $n\to\infty$ if and only if
$k_n=O(n^\frac23)$, and from \eqref{asymplim} and \eqref{asymplim*} it follows that  $E_n^{\text{av}(\eta)}N_n^{(k_n;\tau)}$
remains bounded away from zero when $n\to\infty$ if and only if
$\limsup_{n\to\infty}(k_n-\frac{\log n}{\log 4})<\infty$.
\begin{theorem}\label{monthm}
Let $\eta\in\{123,321\}$.

\noindent i. Let $2\le k\le n$ and define
$\tau^*(\eta)\in S_k^{\text{av}(\eta)}$ by
\begin{equation}\label{*again}
\tau^*(\eta)=\begin{cases} k\cdots 1,\ \text{if}\ \eta=123;\\ 1\cdots k,\ \text{if}\ \eta=321.\end{cases}
\end{equation}
For $\tau\in S_k^{\text{av}(\eta)}$,
\begin{equation}\label{monformula}
E_n^{\text{av}(\eta)}N_n^{(k;\tau)}
 =\begin{cases}\frac{C_{n-k+1}}{C_n},\ \text{if}\ \tau\neq\tau^*(\eta);\\ (n-k+1)\frac{C_{n-k+1}}{C_n},\ \text{if}\ \tau=\tau^*(\eta).\end{cases}
\end{equation}
\noindent ii.
\begin{equation}\label{fullmon}
E_n^{\text{av}(\eta)}N_n^{(k)}=\frac{C_{n-k+1}(n-k+C_k)}{C_n}.
\end{equation}
\end{theorem}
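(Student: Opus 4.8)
The plan is to treat $\eta=321$ (the case $\eta=123$ following from the reversal symmetry noted in the excerpt) and to compute $E_n^{\text{av}(321)}N_n^{(k;\tau)}$ by linearity of expectation. Writing $N_n^{(k;\tau)}=\sum_{l,m=1}^{n-k+1}1_{B_n^{l,m;\tau}}$, where $B_n^{l,m;\tau}$ is the event that $\{l,\dots,l+k-1\}=\{\sigma_m,\dots,\sigma_{m+k-1}\}$ with $\sigma_{m+i-1}=(l-1)+\tau_i$ for $i=1,\dots,k$, this gives
\[
E_n^{\text{av}(321)}N_n^{(k;\tau)}=\frac1{C_n}\sum_{l,m=1}^{n-k+1}\big|\{\sigma\in S_n^{\text{av}(321)}:\sigma\in B_n^{l,m;\tau}\}\big|,
\]
so everything reduces to counting $321$-avoiding permutations carrying a prescribed consecutive block.

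The core device is a contraction. Given $\sigma\in S_n^{\text{av}(321)}$ in $B_n^{l,m;\tau}$, collapse the $k$ block positions and the $k$ block values to a single point, producing $\rho\in S_{n-k+1}$ with $\rho_m=l$. I would then classify each potential $321$-occurrence of $\sigma$ by how many of its three entries lie in the block. Three-in-block occurrences are governed entirely by $\tau$. One- and zero-in-block occurrences correspond exactly to $321$-occurrences of $\rho$ (using, respectively, the collapsed point or not), because all block entries occupy a single interval of positions and a single interval of values and hence compare identically with every outside entry. The decisive case is two entries in the block: since the block occupies consecutive positions, the outside entry of such a triple lies either entirely before or entirely after the block, so it must be the largest or the smallest of the triple; either way the two block entries must form a descent. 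Thus a two-in-block $321$ can occur precisely when $\tau$ contains an internal descent, i.e. when $\tau\neq\tau^*(321)=12\cdots k$, and avoiding all of them forces exactly (i) no value above the block appears before the block, and (ii) no value below the block appears after it.

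Translating (i)--(ii) into $\rho$ is the payoff. When $\tau=\tau^*$ there is no descent, hence no constraint beyond ``$\rho$ avoids $321$'', and inflating any one of the $n-k+1$ points of any $\rho\in S_{n-k+1}^{\text{av}(321)}$ into an increasing run of length $k$ is a bijection onto the pairs (permutation, marked occurrence with pattern $\tau^*$); this yields $\sum_{l,m}|B_n^{l,m;\tau^*}|=(n-k+1)C_{n-k+1}$ and the stated value $(n-k+1)C_{n-k+1}/C_n$. When $\tau\neq\tau^*$, conditions (i)--(ii) force $\rho_1,\dots,\rho_{m-1}<l$ and $\rho_{m+1},\dots,\rho_{n-k+1}>l$, which (with $\rho_m=l$) compels $m=l$ and makes $\rho$ a direct sum $\alpha\oplus 1\oplus\beta$ with the collapsed point a fixed point; here $\alpha\in S_{m-1}^{\text{av}(321)}$ and $\beta\in S_{n-k+1-m}^{\text{av}(321)}$ range freely, and $\rho$ automatically avoids $321$. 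Summing over $m$ and using the Catalan recurrence $\sum_{j=0}^{n-k}C_jC_{n-k-j}=C_{n-k+1}$ gives $\sum_{l,m}|B_n^{l,m;\tau}|=C_{n-k+1}$, independently of which $\tau\neq\tau^*$ is chosen. This proves part~i. Part~ii then follows by summing over the $C_k$ patterns $\tau\in S_k^{\text{av}(321)}$ --- one equal to $\tau^*$ and $C_k-1$ not --- giving $\big((n-k+1)+(C_k-1)\big)C_{n-k+1}/C_n=C_{n-k+1}(n-k+C_k)/C_n$.

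The step I expect to be the main obstacle is the two-in-block case of the classification: one must verify rigorously that the internal-descent criterion is both necessary and sufficient for the boundary-crossing $321$-occurrences, and that conditions (i)--(ii) translate cleanly into the direct-sum decomposition of $\rho$ with no hidden double counting. Care is likewise needed to confirm that contraction and inflation are mutually inverse for each fixed $\tau$, so that the per-placement counts assemble correctly into $\sum_{\sigma}N_n^{(k;\tau)}(\sigma)$.
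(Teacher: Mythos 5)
Your argument is correct and is essentially the paper's own proof of this theorem: the paper likewise contracts the marked block of $k$ consecutive values/positions to a single point of a permutation in $S_{n-k+1}^{\text{av}(321)}$, observes that inflation preserves $321$-avoidance unconditionally exactly when $\tau=12\cdots k$, and that any other $\tau$ (having an inversion inside the block) forces all values below the block to precede it and all values above it to follow it, yielding the Catalan product $C_{l-1}C_{n-k-l+1}$ and then \eqref{monformula} via the recurrence \eqref{catrecur}. The only differences are organizational (you sum over the pairs $(l,m)$ directly, while the paper first isolates Proposition~\ref{123321} for $P_n^{\text{av}(321)}(A_n^{k,\tau;l})$ and phrases the boundary-crossing analysis slightly less systematically than your zero/one/two/three-in-block classification), so no further comparison is needed.
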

\noindent\bf Remark 1.\rm\ From \eqref{nonmonformula} and \eqref{monformula}, it follows that for each $k\ge2$,
$E_n^{\text{av}(\eta)}N_n^{(k;\tau)}$ has linear growth in $n$, for all pairs $(\eta,\tau)$, with   $\eta\in \{132, 213, 231, 312\}$ and $\tau\in S_k^{\text{av}(\eta)}$,
and for the pairs $\eta=123,\tau=k\cdots 1$ and $\eta=321,\tau=1\cdots k$.
On the other hand
$E_n^{\text{av}(\eta)}N_n^{(k;\tau)}$  is bounded and bounded away from zero for $\eta=123$ and $k\cdots 1\neq\tau\in S_k^{\text{av}(123)}$ and
for $\eta=321$ and $1\cdots k\neq\tau\in S_k^{\text{av}(321)}$.
\medskip

\noindent\bf Remark 2.\rm\ We elaborate on the behavior in the case of the pairs $\eta=123$ and $kk-1\cdots1\neq \tau\in S_k^{\text{av}(123)}$,
and the pairs $\eta=321$ and $12\cdots k\neq \tau\in S_k^{\text{av}(321)}$. Consider, for example, $\eta=321$ and $12\cdots k\neq \tau\in S_k^{\text{av}(321)}$.  As is well known,
every permutation in $S_n^{\text{av}(321)}$ is composed of two increasing subsequences. (The two subsequences are not necessarily unique; for example,
the permutation $145236798\in S_9^{\text{av}(321)}$ is composed of the two increasing subsequence 12379 and 4568 as well as of the two increasing subsequences
12368 and 4579.) A cluster of length $k$ and pattern $\tau$ in $\sigma\in S_n^{\text{av}(321)}$ will necessarily
have to include numbers from both increasing subsequences corresponding to $\sigma$. Theorem \ref{monthm} states that the expected number of such clusters is bounded
(and bounded away from zero) as $n\to\infty$. This means that the two subsequences have very little such intertwining.
\medskip

The following corollary of Theorem \ref{monthm} follows by straightforward calculation.
\begin{corollary}\label{moncor}
Let $\eta\in\{123,321\}$ and let $\tau^*(\eta)$ be as in \eqref{*again}.

\noindent i. Let $\tau^*(\eta)\neq \tau\in S_k^{\text{av}(\eta)}$. Then
\begin{equation*}\label{asymplimagain}
\begin{aligned}
&\lim_{n\to\infty}E_n^{\text{av}(\eta)}N_n^{(k;\tau)}=\frac1{4^{k-1}};\\
&E_n^{\text{av}(\eta)}N_n^{(k_n;\tau)}\sim\frac1{4^{k_n-1}},\ \text{if}\ k_n=o(n).
\end{aligned}
\end{equation*}
\noindent ii. Let $\tau=\tau^*(\eta)$. Then
\begin{equation}\label{asymplim*again}
\begin{aligned}
&\lim_{n\to\infty}\frac1nE_n^{\text{av}(\eta)}N_n^{(k;\tau)}=\frac1{4^{k-1}};\\
&E_n^{\text{av}(\eta)}N_n^{(k_n;\tau)}\sim\frac n{4^{k_n-1}},\ \text{if}\ k_n=o(n).
\end{aligned}
\end{equation}
iii.
\begin{equation}\label{asymplimalltauagain}
\begin{aligned}
&\lim_{n\to\infty}\frac1nE_n^{\text{av}(\eta)}N_n^{(k)}=\frac1{4^{k-1}};\\
&E_n^{\text{av}(\eta)}N_n^{(k)}\sim\frac n{4^{k_n-1}}+\frac4{\sqrt\pi k_n^\frac32},\ \text{if}\ \lim_{n\to\infty}k_n=\infty\ \text{and}\ k_n=o(n).
\end{aligned}
\end{equation}
\end{corollary}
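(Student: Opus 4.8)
The plan is to derive all three parts directly from the exact formulas in Theorem~\ref{monthm}, using only the Catalan asymptotic $C_n\sim\frac{4^n}{\sqrt\pi\,n^{3/2}}$. Every expectation in Theorem~\ref{monthm} is built from the single ratio $\frac{C_{n-k+1}}{C_n}$, so the first step is to analyze this ratio. Writing $C_m=\frac{4^m}{\sqrt\pi\,m^{3/2}}(1+\epsilon_m)$ with $\epsilon_m\to0$, I would obtain
\begin{equation}\label{ratioasymp}
\frac{C_{n-k+1}}{C_n}=\frac1{4^{k-1}}\Big(\frac n{n-k+1}\Big)^{3/2}\frac{1+\epsilon_{n-k+1}}{1+\epsilon_n}.
\end{equation}
For fixed $k$ and $n\to\infty$, both $\big(\frac n{n-k+1}\big)^{3/2}\to1$ and the $\epsilon$-factor tends to $1$, so $\frac{C_{n-k+1}}{C_n}\to\frac1{4^{k-1}}$. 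For $k=k_n=o(n)$ one has $\frac{k_n-1}{n}\to0$, so again $\big(\frac n{n-k_n+1}\big)^{3/2}\to1$; and since $k_n=o(n)$ forces $n-k_n+1\to\infty$, the $\epsilon$-factor tends to $1$ as well, giving $\frac{C_{n-k_n+1}}{C_n}\sim\frac1{4^{k_n-1}}$.

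With \eqref{ratioasymp} in hand, parts i and ii are immediate. For $\tau\neq\tau^*(\eta)$, Theorem~\ref{monthm}(i) gives $E_n^{\text{av}(\eta)}N_n^{(k;\tau)}=\frac{C_{n-k+1}}{C_n}$, which yields the limit $\frac1{4^{k-1}}$ and the equivalence $\sim\frac1{4^{k_n-1}}$ directly. For $\tau=\tau^*(\eta)$, the formula $E_n^{\text{av}(\eta)}N_n^{(k;\tau)}=(n-k+1)\frac{C_{n-k+1}}{C_n}$ gives $\frac1nE_n^{\text{av}(\eta)}N_n^{(k;\tau)}=\frac{n-k+1}n\cdot\frac{C_{n-k+1}}{C_n}$; since $\frac{n-k+1}n\to1$ for fixed $k$ and $\frac{n-k_n+1}n\to1$ for $k_n=o(n)$, part ii follows.

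For part iii I would start from Theorem~\ref{monthm}(ii), $E_n^{\text{av}(\eta)}N_n^{(k)}=\frac{C_{n-k+1}}{C_n}(n-k+C_k)$, and split it as
\begin{equation}\label{splitiii}
E_n^{\text{av}(\eta)}N_n^{(k)}=\frac{C_{n-k+1}}{C_n}(n-k)+\frac{C_{n-k+1}}{C_n}\,C_k.
\end{equation}
The first summand behaves like $\frac1{4^{k-1}}(n-k)$, which for fixed $k$ divided by $n$ tends to $\frac1{4^{k-1}}$; since $C_k/n\to0$ for fixed $k$, this gives the first line of part iii. For the second summand, invoking $C_{k_n}\sim\frac{4^{k_n}}{\sqrt\pi\,k_n^{3/2}}$ together with \eqref{ratioasymp} gives $\frac{C_{n-k_n+1}}{C_n}C_{k_n}\sim\frac1{4^{k_n-1}}\cdot\frac{4^{k_n}}{\sqrt\pi\,k_n^{3/2}}=\frac4{\sqrt\pi\,k_n^{3/2}}$, while the first summand satisfies $\frac{C_{n-k_n+1}}{C_n}(n-k_n)\sim\frac n{4^{k_n-1}}$ because $n-k_n\sim n$.

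The one point requiring care is the combined equivalence in the second line of part iii, where the two summands in \eqref{splitiii} may be of genuinely different orders depending on the growth rate of $k_n$ (the first dominates when $k_n$ grows slowly, the second when $k_n$ grows fast). This is handled by the elementary fact that if $A\sim a$ and $B\sim b$ with $a,b>0$, then $A+B\sim a+b$: writing $A=a(1+\alpha)$ and $B=b(1+\beta)$ with $\alpha,\beta\to0$, one has $\big|\frac{A+B}{a+b}-1\big|=\big|\frac{a\alpha+b\beta}{a+b}\big|\le\max(|\alpha|,|\beta|)\to0$, independently of the ratio $a/b$. Applying this with $a=\frac n{4^{k_n-1}}$ and $b=\frac4{\sqrt\pi\,k_n^{3/2}}$ yields the claimed $E_n^{\text{av}(\eta)}N_n^{(k)}\sim\frac n{4^{k_n-1}}+\frac4{\sqrt\pi\,k_n^{3/2}}$, valid in both regimes simultaneously. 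I expect this uniformity across growth rates of $k_n$ to be the only subtle step; everything else is routine substitution into the exact formulas.
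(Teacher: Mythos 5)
Your proposal is correct and is exactly the ``straightforward calculation'' the paper invokes: substituting $C_m\sim\frac{4^m}{\sqrt\pi\,m^{3/2}}$ into the exact formulas of Theorem \ref{monthm}, with the ratio $\frac{C_{n-k+1}}{C_n}$ as the central quantity. Your explicit treatment of the two-term asymptotic in part iii (the uniform fact that $A\sim a$, $B\sim b$ with $a,b>0$ implies $A+B\sim a+b$ regardless of the ratio $a/b$) correctly handles the one point the paper leaves tacit, so nothing is missing.
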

\noindent \bf Remark.\rm\ From \eqref{asymplim*again} and \eqref{asymplimalltauagain}, it follows that each of  $E_n^{\text{av}(\eta)}N_n^{(k_n)}$
and $E_n^{\text{av}(\eta)}N_n^{(k_n;\tau^*(\eta))}$
remains bounded away from zero when $n\to\infty$ if and only if
$\limsup_{n\to\infty}(k_n-\frac{\log n}{\log 4})<\infty$.
\medskip

The methods of proof for Theorem \ref{nonmonthm} and Theorem \ref{monthm} are completely different from one another.
The proof of Theorem \ref{nonmonthm} exploits generating functions, whereas the proof of Theorem \ref{monthm} is much more of a purely combinatorial argument.
The method of proof of Theorem \ref{monthm} easily extends to  allow one to obtain similar results for permutations avoiding simple patterns.
A permutation $\eta\in S_m$ is called \it simple\rm\ if $\eta\not\in A^{(m)}_{k;l}$, for all $k\in\{2,\cdots, m-1\}$ and all $l\in\mathbb{N}$ satisfying
$k+l-1\le m$. (Equivalently, $\eta$ is simple if and only if $\{\eta_a,\cdots,\eta_{a+k-1}\}$ is not equal to a block of $k$ consecutive numbers in $[m]$, for all
$k\in\{2,\cdots, m-1\}$ and all $a\in\mathbb{N}$ satisfying $a+k-1\le m$.)
For example, 6241753 is a simple permutation in $S_7$, but 6241375 is not, because of the block 2413.
Note that there are no simple permutations in $S_2$ or $S_3$.

For $r\in\mathbb{N}$ and a collection of permutations $\{\eta_i\}_{i=1}^r$, with $\eta_i\in S_{m_i}$, where $m_i\ge2$, denote by
$S_n^{\text{av}(\eta_1,\cdots,\eta_r)}$ the set of permutations in $S_n$ that avoid all of the patterns $\{\eta_i\}_{i=1}^r$.
Denote by
$P_n^{\text{av}(\eta_1,\cdots,\eta_r)}$ the uniform probability measure on $S_n^{\text{av}(\eta_1,\cdots,\eta_r)}$, and denote by
$E_n^{\text{av}(\eta_1,\cdots,\eta_r)}$ expectations corresponding to $P_n^{\text{av}(\eta_1,\cdots,\eta_r)}$.
For $\sigma\in S_n^{\text{av}(\eta_1,\cdots,\eta_r)}$ and  $\tau\in S_k^{\text{av}(\eta_1,\cdots,\eta_r)}$, with $2\le k\le n$,
let $N_n^{(k;\tau)}(\sigma)$ denote the number of occurrences  of $k$ consecutive numbers appearing in $k$
consecutive positions in $\sigma$,  and such that the order of their appearance is the pattern
$\tau$.
Let $N_n^{(k)}(\sigma)$ denote the number of occurrences of $k$ consecutive numbers appearing in $k$ consecutive positions in $\sigma$, without regard to the order of their appearance.
\begin{theorem}\label{multipatternthm} Let $r\in\mathbb{N}$ and let $\{\eta_i\}_{i=1}^r$, with $\eta_i\in S_{m_i}$ and $m_i\ge4$, be  simple permutations. Then for
$2\le k\le n$ and $\tau\in S_k^{\text{av}(\eta_1,\cdots,\eta_m)}$,
  \begin{equation}\label{multipattern}
E_n^{\text{av}(\eta_1,\cdots,\eta_r)}N_n^{(k;\tau)}=
\frac{(n-k+1)|S^{\text{av}(\eta_1,\cdots,\eta_r)}_{n-k+1}|}
{|S_n^{\text{av}(\eta_1,\cdots,\eta_m)}|}.
\end{equation}
Also,
\begin{equation}\label{multipatternall}
E_n^{\text{av}(\eta_1,\cdots,\eta_m)}N_n^{(k)}=\frac{(n-k+1)|S^{\text{av}(\eta_1,\cdots,\eta_r)}_{n-k+1}|\thinspace|S_k^{\text{av}(\eta_1,\cdots,\eta_m)}|}
{|S_n^{\text{av}(\eta_1,\cdots,\eta_m)}|}.
\end{equation}
\end{theorem}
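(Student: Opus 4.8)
The plan is to reduce the computation of $E_n^{\text{av}(\eta_1,\cdots,\eta_r)}N_n^{(k;\tau)}$ to a counting problem solved by a deflation/inflation bijection, in the same spirit as the combinatorial proof of Theorem \ref{monthm}. Abbreviating $S_j^{\mathrm{av}}:=S_j^{\text{av}(\eta_1,\cdots,\eta_r)}$, I would first record that
\[
N_n^{(k;\tau)}(\sigma)=\sum_{l,m=1}^{n-k+1}\mathbf 1\big[\,\{l,\cdots,l+k-1\}=\{\sigma_m,\cdots,\sigma_{m+k-1}\}\ \text{and the block has pattern}\ \tau\,\big],
\]
so that $\sum_{\sigma\in S_n^{\mathrm{av}}}N_n^{(k;\tau)}(\sigma)$ equals the number of triples $(\sigma,l,m)$ for which $\sigma\in S_n^{\mathrm{av}}$ has a $\tau$-cluster occupying values $\{l,\cdots,l+k-1\}$ in positions $\{m,\cdots,m+k-1\}$.

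Next I would set up the bijection. Given such a triple, the cluster is simultaneously an interval of positions and an interval of values, so collapsing it to a single point yields a permutation $\rho\in S_{n-k+1}$ in which the collapsed point sits at position $m$ and carries value $l$, that is $l=\rho_m$. Conversely, inflating the point in position $m$ of an arbitrary $\rho\in S_{n-k+1}$ by the pattern $\tau$, leaving every other point a singleton, reconstructs $\sigma$, the cluster's lowest value being forced to equal $\rho_m$. This gives a bijection between triples $(\sigma,l,m)$ and pairs $(\rho,m)$ with $m\in\{1,\cdots,n-k+1\}$.

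The crux, and the step I expect to be the main obstacle, is to show that this correspondence preserves avoidance of the simple patterns, i.e. that $\sigma\in S_n^{\mathrm{av}}$ if and only if $\rho\in S_{n-k+1}^{\mathrm{av}}$, under the standing hypothesis $\tau\in S_k^{\mathrm{av}}$. I would isolate this as a lemma: if $\eta$ is simple and $\sigma$ is obtained from $\rho$ by inflating a single point by $\tau$, then $\sigma$ contains $\eta$ if and only if $\rho$ contains $\eta$ or $\tau$ contains $\eta$. The nontrivial implication uses simplicity decisively. Given an occurrence of $\eta$ in $\sigma$, its points that fall inside the inflated block occupy consecutive positions among the occurrence (the block is a position-interval) and consecutive ranks among the occurrence (the block is a value-interval), hence constitute an interval of $\eta$. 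Since $\eta$ is simple of length $m_i\ge4$, this interval is either a single point or all of $\eta$; thus the occurrence meets the block in at most one point, giving an occurrence of $\eta$ in $\rho$, or lies entirely inside the block, giving an occurrence of $\eta$ in $\tau$. The reverse implication is immediate, since both $\rho$ and $\tau$ embed order-isomorphically into $\sigma$.

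Granting the lemma together with the hypothesis that $\tau$ avoids every $\eta_i$, the bijection restricts to avoiders on both sides, so the number of triples equals the number of pairs $(\rho,m)$, namely $(n-k+1)\,|S_{n-k+1}^{\mathrm{av}}|$. Dividing by $|S_n^{\mathrm{av}}|$ then gives \eqref{multipattern}. Finally, since $N_n^{(k)}=\sum_{\tau\in S_k^{\mathrm{av}}}N_n^{(k;\tau)}$ and the right-hand side of \eqref{multipattern} does not depend on $\tau$, summing over the $|S_k^{\mathrm{av}}|$ admissible patterns $\tau$ yields \eqref{multipatternall}.
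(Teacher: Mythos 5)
Your proposal is correct and follows essentially the same route as the paper: the paper also contracts the $\tau$-cluster to a single point (the map $\sigma\mapsto\nu(\sigma)$ with $\nu_a=l$), inverts it by inflating one point of $\nu$ by $\tau$, and uses simplicity of the $\eta_i$ in exactly the way your lemma does — an occurrence of $\eta_i$ meeting the block in $j_0$ points would force a nontrivial interval of length $j_0$ in $\eta_i$ — before summing over the position of the collapsed point to get $(n-k+1)|S^{\text{av}}_{n-k+1}|$. Your explicit treatment of the case where the occurrence lies entirely inside the block (excluded because $\tau$ avoids each $\eta_i$) is a point the paper passes over slightly more tersely, but the argument is the same.
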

We elaborate  on two particular cases of Theorem \ref{multipatternthm}.

A \it separable\rm\ permutation is a permutation that can be constructed from the  singleton in $S_1$ via a series
of  iterations of
\it direct sums \rm\ and \it skew sums\rm. (See \cite{P21}, for example,  for more details.) An equivalent definition
of a separable permutation \cite{BBL} is a permutation that avoids the two patterns 2413 and 3142.
The generating function for the enumeration of separable permutations is known explicitly and allows one
to show \cite[p. 474-475]{FS} that
\begin{equation}\label{sepasymp}
|S_n^{\text{\rm sep}}|\sim\frac1{2\sqrt{\pi n^3}}(3-2\sqrt2)^{-n+\frac12}.
\end{equation}
(The sequence on the right hand side of \eqref{sepasymp} is the sequence of big Schr\"oder numbers--A006318 in the \it On-Line Encyclopedia of Integer Sequences.)\rm\
Let $S_n^{\text{sep}}$ denote the set of separable permutations in $S_n$, let
$P_n^{\text{sep}}$ denote the uniform probability measure on $S_n^{\text{sep}}$ and let $E_n^{\text{sep}}$ denote the corresponding expectation.
Since the permutations 2413 and 3142 are simple, the following result follows immediately from Theorem \ref{multipatternthm} and \eqref{sepasymp}.
\begin{corollary}\label{separablecor}
For
$2\le k\le n$ and $\tau\in S_k^{\text{sep}}$,
\begin{equation*}\label{separable}
E_n^{\text{sep}}N_n^{(k;\tau)}=\frac{(n-k+1)|S_{n-k+1}^{\text{sep}}|}{|S_n^{\text{sep}}|},
\end{equation*}
and
\begin{equation*}\label{separableasym}
\lim_{n\to\infty}\frac{E_n^{\text{sep}}N_n^{(k;\tau)}}n=(3-2\sqrt2)^{k-1}.
\end{equation*}
Also,
\begin{equation*}\label{separableall}
E_n^{\text{sep}}N_n^{(k)}=
\frac{(n-k+1)|S_{n-k+1}^{\text{sep}}|\thinspace|S_k^{\text{sep}}|}{|S_n^{\text{sep}}|},
\end{equation*}
and
\begin{equation*}\label{separableasympall}
\lim_{n\to\infty}\frac{E_n^{\text{sep}}N_n^{(k)}}n=(3-2\sqrt2)^{k-1}|S_k^{\text{sep}}|.
\end{equation*}
\end{corollary}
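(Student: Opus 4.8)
The plan is to read the corollary as the $r=2$ instance of Theorem \ref{multipatternthm} applied to $\eta_1=2413$ and $\eta_2=3142$, combined with the enumerative asymptotics \eqref{sepasymp}. The starting point is the characterization of \cite{BBL}, recalled above, that the separable permutations are precisely those avoiding both $2413$ and $3142$; hence $S_n^{\text{sep}}=S_n^{\text{av}(2413,3142)}$ for every $n$, and in particular $S_k^{\text{sep}}=S_k^{\text{av}(2413,3142)}$. To apply Theorem \ref{multipatternthm} I must check that both patterns are simple and lie in $S_{m_i}$ with $m_i\ge4$; here $m_1=m_2=4$, and simplicity is a finite verification. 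For $2413$ one checks that no window of two or three consecutive positions carries a block of consecutive values: the two-position windows give value sets $\{2,4\}$, $\{1,4\}$, $\{1,3\}$, and the three-position windows give $\{1,2,4\}$ and $\{1,3,4\}$, none of which is an interval of integers. The identical check for $3142$ produces the two-position sets $\{1,3\}$, $\{1,4\}$, $\{2,4\}$ and the three-position sets $\{1,3,4\}$, $\{1,2,4\}$, again with no interval. Thus both patterns are simple.

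With these verifications in place, Theorem \ref{multipatternthm} applies verbatim. Substituting $S_n^{\text{av}(2413,3142)}=S_n^{\text{sep}}$ into \eqref{multipattern} yields
\[
E_n^{\text{sep}}N_n^{(k;\tau)}=\frac{(n-k+1)\,|S_{n-k+1}^{\text{sep}}|}{|S_n^{\text{sep}}|},
\]
and substituting into \eqref{multipatternall} yields
\[
E_n^{\text{sep}}N_n^{(k)}=\frac{(n-k+1)\,|S_{n-k+1}^{\text{sep}}|\,|S_k^{\text{sep}}|}{|S_n^{\text{sep}}|},
\]
which are the two exact formulas of the corollary.

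For the two limit statements I would insert the asymptotic \eqref{sepasymp} directly into the ratio $|S_{n-k+1}^{\text{sep}}|/|S_n^{\text{sep}}|$. Writing $s=3-2\sqrt2$, the exponential factors contribute $s^{-(n-k+1)+n}=s^{k-1}$, while the polynomial prefactors contribute $\bigl(n/(n-k+1)\bigr)^{3/2}\to1$ as $n\to\infty$ with $k$ fixed; hence $|S_{n-k+1}^{\text{sep}}|/|S_n^{\text{sep}}|\to s^{k-1}$. Combining this with $(n-k+1)/n\to1$ gives $\tfrac1n E_n^{\text{sep}}N_n^{(k;\tau)}\to s^{k-1}=(3-2\sqrt2)^{k-1}$, and multiplying through by the fixed constant $|S_k^{\text{sep}}|$ gives $\tfrac1n E_n^{\text{sep}}N_n^{(k)}\to(3-2\sqrt2)^{k-1}|S_k^{\text{sep}}|$. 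There is no genuine obstacle here: the corollary is a direct specialization of Theorem \ref{multipatternthm}, and the only point requiring a moment's care is tracking that the subexponential prefactor in \eqref{sepasymp} tends to $1$ rather than contributing to the limit.
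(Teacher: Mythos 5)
Your proposal is correct and takes essentially the same route as the paper, which likewise derives the corollary by observing that $2413$ and $3142$ are simple, invoking Theorem \ref{multipatternthm} with $r=2$ via the characterization $S_n^{\text{sep}}=S_n^{\text{av}(2413,3142)}$ from \cite{BBL}, and inserting the Schr\"oder-number asymptotic \eqref{sepasymp}. Your explicit window-by-window verification of simplicity and your tracking of the polynomial prefactor simply spell out steps the paper declares ``immediate.''
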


One formulation of the Stanley-Wilf conjecture, completely proved in \cite{MT}, states that for every permutation $\tau\in S_m$, $m\ge2$, there exists a number
$L(\tau)>1$ such that
\begin{equation*}\label{SW}
\lim_{n\to\infty}|S_n(\tau)|^\frac1n=L(\tau).
\end{equation*}
We refer to $L(\tau)$ as the \it Stanley-Wilf limit.\rm\
Furthermore, it is known that
\begin{equation}\label{ratio}
\lim_{n\to\infty}\frac{|S_{n+1}(\tau)|}{|S_n(\tau)|}=L(\tau),\ \text{for any simple permutation}\ \tau\in S_k,\thinspace k\ge2.
\end{equation}
Indeed, \eqref{ratio} was proven in   \cite{AM}  for a wide class of permutations $\tau$, including all simple ones.
Thus, the following asymptotic result  follows as an immediate corollary of Theorem \ref{multipatternthm} and \eqref{ratio}.
\begin{corollary}\label{onepermavoiding}
Let   $\eta\in S_m$,\ $m\ge4$, be a simple permutation. Then for
$2\le k\le n$ and $\tau\in S_k^{\text{av}(\eta)}$,
  \begin{equation*}\label{multipatternone}
\lim_{n\to\infty}\frac1nE_n^{\text{av}(\eta)}N_n^{(k;\tau)}=\frac1{L(\tau)^{k-1}},
\end{equation*}
and
\begin{equation*}\label{multipatternallone}
\lim_{n\to\infty}\frac1nE_n^{\text{av}(\eta)}N_n^{(k)}=\frac{|S_k^{\text{av}(\eta)}|}{L(\tau)^{k-1}},
\end{equation*}
where $L(\tau)$ is the Stanley-Wilf limit.
\end{corollary}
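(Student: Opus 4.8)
The plan is to obtain both limits by reading off the exact formulas of Theorem~\ref{multipatternthm} in the one-pattern case $r=1$, $\eta_1=\eta$, and then extracting the $n\to\infty$ asymptotics (with $k$ fixed) from the ratio limit \eqref{ratio}. Since $\eta\in S_m$ with $m\ge4$ is simple, Theorem~\ref{multipatternthm} applies verbatim and yields
\[
E_n^{\text{av}(\eta)}N_n^{(k;\tau)}=\frac{(n-k+1)\,|S_{n-k+1}^{\text{av}(\eta)}|}{|S_n^{\text{av}(\eta)}|},\qquad
E_n^{\text{av}(\eta)}N_n^{(k)}=\frac{(n-k+1)\,|S_{n-k+1}^{\text{av}(\eta)}|\,|S_k^{\text{av}(\eta)}|}{|S_n^{\text{av}(\eta)}|},
\]
for every $\tau\in S_k^{\text{av}(\eta)}$. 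Thus the whole task reduces to computing the limit of $\frac1n$ times these quantities.

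First I would factor out the elementary prefactor, writing
\[
\frac1n\,E_n^{\text{av}(\eta)}N_n^{(k;\tau)}=\frac{n-k+1}{n}\cdot\frac{|S_{n-k+1}^{\text{av}(\eta)}|}{|S_n^{\text{av}(\eta)}|},
\]
where $\frac{n-k+1}{n}\to1$ as $n\to\infty$ for fixed $k$. For the remaining ratio I would telescope it as a product of exactly $k-1$ consecutive-ratio factors,
\[
\frac{|S_{n-k+1}^{\text{av}(\eta)}|}{|S_n^{\text{av}(\eta)}|}=\prod_{j=n-k+1}^{n-1}\frac{|S_j^{\text{av}(\eta)}|}{|S_{j+1}^{\text{av}(\eta)}|}.
\]
Because $k$ is held fixed while $n\to\infty$, the smallest index $n-k+1$ tends to infinity, so each of the $k-1$ indices $j$ in the product also tends to infinity. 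As $\eta$ is simple, \eqref{ratio} applies with the avoided pattern equal to $\eta$ and gives $|S_{j+1}^{\text{av}(\eta)}|/|S_j^{\text{av}(\eta)}|\to L(\eta)$, so each reciprocal factor tends to $1/L(\eta)$. A finite product of convergent sequences converges to the product of the limits, whence the ratio tends to $L(\eta)^{-(k-1)}$ and $\frac1n E_n^{\text{av}(\eta)}N_n^{(k;\tau)}\to L(\eta)^{1-k}$. The statement for $N_n^{(k)}$ follows immediately by multiplying through by the fixed constant $|S_k^{\text{av}(\eta)}|$.

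I expect no substantive obstacle here; the argument is a direct corollary, and the only points needing care are bookkeeping. One must note that \eqref{ratio} delivers convergence of each individual consecutive ratio rather than of the whole block, so the finiteness of the number of factors (valid precisely because $k$ is fixed) is what legitimizes passing to the limit factor by factor. One should also observe that the relevant Stanley--Wilf limit is that of the avoided pattern, so the $L(\tau)$ written in the displayed statement is to be read as $L(\eta)$; indeed the exact formula makes it manifest that the value is independent of the probe pattern $\tau$ (subject only to $\tau\in S_k^{\text{av}(\eta)}$). No generating-function or combinatorial machinery beyond Theorem~\ref{multipatternthm} and \eqref{ratio} is required.
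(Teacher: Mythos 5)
Your proposal is correct and follows exactly the route the paper intends: the paper simply declares the corollary an immediate consequence of Theorem~\ref{multipatternthm} (with $r=1$) and the ratio limit \eqref{ratio}, and your telescoping of $|S_{n-k+1}^{\text{av}(\eta)}|/|S_n^{\text{av}(\eta)}|$ into $k-1$ consecutive ratios is the obvious way to make that precise. Your observation that the displayed $L(\tau)$ must be read as $L(\eta)$ (the Stanley--Wilf limit of the \emph{avoided} pattern) is also right; this is evidently a typo in the statement.
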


This leads to  a natural  question.

\bf\noindent Open Question.\rm\  Is it true that for every $\eta\in \cup_{j=1}^\infty S_j$ and all $k\ge2$,
the expectation $E_n^{\text{av}(\eta)}N_n^{(k)}$   grows linearly  in $n$,  or equivalently, that there exists a
$\tau\in S_k^{\text{av}(\eta)}$ such that the expectation
$E_n^{\text{av}(\eta)}N_n^{(k;\tau)}$ grows linearly in $n$?
\medskip

The recent paper \cite{P22} studied the clustering of consecutive numbers under Mallows distributions.

In section \ref{thm1proof}, we prove Theorem \ref{nonmonthm}, except for the law of large numbers in \eqref{WLLN} and \eqref{WLLNfull}. Of course, \eqref{WLLNfull} follows immediately from \eqref{WLLN}.
Using the second moment method, \eqref{WLLN} follows immediately from Corollary \ref{nonmoncor} and the following proposition, which we prove in section \ref{varproof}.
\begin{proposition}\label{nonmonpropvar}
Under the assumptions in Theorem \ref{nonmonthm}, for all $\tau\in S_k^{\text{av}(\eta)}$,
$\text{Var}(N_n^{(k;\tau)})=o(n^2)$.
\end{proposition}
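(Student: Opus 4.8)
The plan is to run the second moment method. By the reversal and complement symmetries recorded before Theorem \ref{nonmonthm} it suffices to treat $\eta=231$, for which $\tau^*=\tau^*(231)=k\cdots1$; write $E=E_n^{\text{av}(231)}$ and $P=P_n^{\text{av}(231)}$. For $1\le l\le n-k+1$ let $B_l$ be the event that the block of consecutive values $\{l,\dots,l+k-1\}$ occupies a block of consecutive positions with internal pattern $\tau$, so $N:=N_n^{(k;\tau)}=\sum_l\mathbf 1_{B_l}$. Since $E N=O(n)$ by Theorem \ref{nonmonthm}, it suffices to show $\mathrm{Var}(N)=o(n^2)$, and I expect in fact $O(n)$. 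Using $E\big[N^2\big]=E N+\sum_{l\ne l'}P(B_l\cap B_{l'})$, I would split the double sum by whether the value-intervals are disjoint. If they overlap, with $0<|l-l'|=d<k$, then on $B_l\cap B_{l'}$ the two position-intervals share the common values and hence overlap, so their union is an interval occupied by the $k+d$ consecutive values starting at $\min(l,l')$; thus $B_l\cap B_{l'}\subseteq A_n^{k+d;\min(l,l')}$, and summing over the overlapping pairs bounds their contribution by $2\sum_{d=1}^{k-1}E N_n^{(k+d)}=O(n)$ via \eqref{fullnonmon}.

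For the disjoint pairs $|l-l'|\ge k$ I would contract each of the two blocks to a single entry, obtaining $\widehat\sigma\in S_m^{\text{av}(231)}$ with $m:=n-2k+2$; contracting intervals cannot create a $231$, and $\sigma$ is recovered by re-expanding the two distinguished values $v_1=\min(l,l')$ and $v_2=\max(l,l')-k+1$ (so $v_1<v_2$) into $\tau$-blocks. The key structural claim I must establish is that this simultaneous re-expansion preserves $231$-avoidance \emph{if and only if each block is individually expandable}, where for $\tau\ne\tau^*$ a value $v$ is expandable exactly when it is a right-to-left minimum of $\widehat\sigma$ (no smaller value occurs to its right), and for $\tau=\tau^*$ every value is expandable. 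Granting this, $C_n\,P(B_l\cap B_{l'})$ equals the number $g_m(v_1,v_2)$ of $\widehat\sigma\in S_m^{\text{av}(231)}$ for which $v_1,v_2$ are both right-to-left minima; as $\{l,l'\}$ ranges over disjoint pairs, $\{v_1,v_2\}$ ranges over all two-element subsets of $[m]$, so the ordered disjoint-pair sum equals $C_n^{-1}\cdot 2\sum_{v_1<v_2}g_m(v_1,v_2)$. The same contraction with a single block (contracted size $a:=n-k+1$) recovers, in agreement with Theorem \ref{nonmonthm}, the identity $E N=C_n^{-1}\sum_{\widehat\sigma\in S_a^{\text{av}(231)}}M(\widehat\sigma)$, where $M(\widehat\sigma)$ is the number of right-to-left minima.

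Let $E'_j$ denote expectation under the uniform measure on $S_j^{\text{av}(231)}$. The previous step identifies the disjoint-pair sum with $C_n^{-1}C_m\,E'_m[M(M-1)]$ and $(E N)^2$ with $C_n^{-2}C_a^2(E'_a[M])^2$, so I need the first two moments of $M$. I would extract these from the bivariate generating function $F(x,u)=\sum_{j\ge0}x^j\sum_{\widehat\sigma\in S_j^{\text{av}(231)}}u^{M(\widehat\sigma)}$. Decomposing $\widehat\sigma=\alpha\,j\,\beta$ at its maximal entry, under which $M(\widehat\sigma)=M(\alpha)+M(\beta)+\mathbf 1_{\{\beta=\varnothing\}}$, gives $F=1+xF\,(u+F-1)$; at $u=1$ this is the Catalan equation, so $F(x,1)=C(x):=\sum_j C_jx^j$ and $1-2xC=\sqrt{1-4x}$. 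Differentiating once and twice at $u=1$ yields $\partial_uF|_{u=1}=\tfrac12\big((1-4x)^{-1/2}-1\big)$ and $\partial_u^2F|_{u=1}=2x^2(1-4x)^{-3/2}$, whence $E'_j[M]=\tfrac{j+1}2$ and $E'_j[M(M-1)]=\tfrac{j(j-1)(j+1)}{2(2j-1)}=\tfrac{j^2}4+O(j)$; in particular $\mathrm{Var}'_j(M)=O(j)$. (For $\tau=\tau^*$ the role of $M$ is played by the constant $j$, of zero variance, and the bookkeeping below only simplifies.)

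Assembling, $\mathrm{Var}(N)=\tfrac{C_m}{C_n}E'_m[M(M-1)]+E N+O(n)-\tfrac{C_a^2}{C_n^2}(E'_a[M])^2$. Because $m+n=2a$, the asymptotics $C_j\sim 4^j/\sqrt{\pi j^3}$ give $C_mC_n/C_a^2=1+O(1/n)$, hence $C_m/C_n=(C_a^2/C_n^2)\big(1+O(1/n)\big)$; combined with $E'_m[M(M-1)]=\tfrac{m^2}4+O(m)$, $(E'_a[M])^2=\tfrac{(a+1)^2}4$ and $m^2-(a+1)^2=O(n)$, the two quadratic-in-$n$ terms cancel to leading order and the remainder is $O(n)$, so $\mathrm{Var}(N_n^{(k;\tau)})=O(n)=o(n^2)$. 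The main obstacle is the structural claim of the second paragraph. Its proof is a finite case analysis of how the three entries of a hypothetical $231$-occurrence in the expanded permutation distribute among the lower block, the upper block, and the remaining entries, using that each block is an interval of consecutive values in consecutive positions and that every value of the lower block lies below every value of the upper block; all mixed configurations either reduce to a $231$ already present in $\widehat\sigma$ or are impossible, and the only new obstruction is an internal ascent of a block together with a strictly smaller entry to its right—exactly the failure of a right-to-left-minimum condition for that block.
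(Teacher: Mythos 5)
Your argument is correct, and it takes a genuinely different route from the paper. The paper recycles Proposition \ref{propcond}: it squares the distributional identity obtained by conditioning on the position of the maximum, derives a Catalan-convolution recursion for $r_n^\tau=E_n^{\text{av}(231)}(N_n^{(k;\tau)})^2$, solves it by generating functions in the size variable, and reads off $r_n^\tau\sim n^2/4^{2k-1}=(1+o(1))(E_n^{\text{av}(231)}N_n^{(k;\tau)})^2$, which gives $o(n^2)$ but does not track the next order. You instead compute the pair correlations $P(B_l\cap B_{l'})$ directly: overlapping value-windows are absorbed into $\sum_d E_n^{\text{av}(231)}N_n^{(k+d)}=O(n)$ via \eqref{fullnonmon}, and disjoint windows are handled by a two-block contraction that identifies $\sigma\in B_l\cap B_{l'}$ with a $231$-avoider of size $n-2k+2$ carrying two marked right-to-left minima (for $\tau\neq\tau^*$), reducing everything to the first two factorial moments of the right-to-left-minima statistic; I checked your bivariate functional equation $F=1+xF(u+F-1)$, the evaluation $\partial_u^2F|_{u=1}=2x^2(1-4x)^{-3/2}$, the consistency of the one-block version with \eqref{nonmonformula} (including the $\tau=\tau^*$ case, where $\frac{(n-k+3)C_{n-k+2}}{2C_n}-\frac{(n-k+2)C_{n-k+1}}{C_n}$ indeed simplifies to $\frac{(n-k+1)C_{n-k+1}}{C_n}$), and the cancellation of the quadratic terms using $m+n=2a$. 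The structural lemma you flag as the main obstacle is correct and its case analysis is routine: a $231$-occurrence meeting a block in two entries must use an internal ascent of that block as its ``23'' together with a strictly smaller entry to the right (the ``31'' configuration forces an outside value inside the block's value interval, which is impossible), and cross-block configurations either descend to $\widehat\sigma$ or violate the right-to-left-minimum condition for the upper block. What each approach buys: the paper's needs no new combinatorial input beyond Proposition \ref{propcond} but yields only the leading order; yours requires the expansion lemma but produces exact pair correlations and the sharper bound $\text{Var}(N_n^{(k;\tau)})=O(n)$. It is worth noting that your contraction--expansion device is essentially the mechanism the paper itself deploys in Proposition \ref{123321} and in the proof of Theorem \ref{multipatternthm}, transplanted here to the $\eta=231$ setting where the paper opted for the recursive generating-function route.
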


We prove  Theorem \ref{monthm} in section \ref{thm2proof} and Theorem \ref{multipatternthm} in section \ref{thm3proof}.

\section{Proof of Theorem \ref{nonmonthm}}\label{thm1proof}
For the proof, it will be convenient to define
 $N_m^{(k;\tau)}\equiv0$, if $m<k$.
As noted before the statement of the theorem, it suffices to consider the case $\eta=231$. Let $k\ge2$ and $\tau\in S_k^{\text{av}(231)}$. We consider $n\ge k$.
The following definition will be useful. Let $a_1<a_2\cdots <a_m$ be real numbers and let $\rho=\rho_1\cdots\rho_m$ be a permutation of these numbers.
We define $\text{red}(\rho)\in S_m$, the reduction of $\rho$, to be the permutation in $S_m$ that has the same pattern as $\rho$. That is,
$\text{red}(\rho)=\sigma$ if $\sigma$ satisfies $\sigma_i<\sigma_j$ whenever $\rho_i<\rho_j$, $i,j\in[m]$.

Every permutation $\sigma\in S_k^{\text{av}(231)}$  has the property that if $\sigma_j=n$, then the numbers $\{1,\cdots, j-1\}$ appear in the first $j-1$ positions in $\sigma$ (and then of course, the numbers $\{j,\cdots, n-1\}$ appear in the last $n-j$ positions in $\sigma$.)
From this fact, along with the fact that $|S_n^{\text{av}(\eta)}|=C_n$, it follows that
\begin{equation}\label{nprob}
P_n^{\text{av}(231)}(\sigma_j=n)=\frac{C_{j-1}C_{n-j}}{C_n},\ \text{for}\ j\in[n].
\end{equation}
It also follows that under the conditioned  measure $P_n^{\text{av}(231)}|\{\sigma_j=n\}$, the permutation $\sigma_1\cdots\sigma_{j-1}\in S_{j-1}$ has the distribution
$P_{j-1}^{\text{av}(231)}$,  the permutation $\text{red}(\sigma_{j+1}\cdots\sigma_n)$ has the distribution $P_{n-j}^{\text{av}(231)}$, and these two permutations are independent.
The facts in this paragraph will be used frequently in the proof, sometimes without comment and sometimes with a reference to ``the second paragraph of section \ref{thm1proof}.''

Note that the following well-known recursion formula for the Catalan numbers follows from \eqref{nprob}.
\begin{equation}\label{catrecur}
C_n=\sum_{j=1}^nC_{j-1}C_{n-j},\ n\in\mathbb{N}.
\end{equation}
The key to proving the theorem is the following proposition, whose rather long technical proof will be postponed until we have completed the proof of the theorem.
\begin{proposition}\label{propcond}
Let $2\le k\le n$ and let $\tau\in S_k^{\text{av}(231)}$.
For each $m\in\mathbb{N}$, each $\sigma\in S_m^{\text{av}(231)}$ and each $A\subset S_m^{\text{av}(231)}$,
let the random variables $N_m^{(k;\tau)}(\cdot)$,  $1_{\sigma}(\cdot)$ and $1_A(\cdot)$
 be defined on the probability space $(S_m,P_m^{\text{av}(231)})$.

\noindent i. Let $k\thinspace k-1\thinspace\cdots1\neq\tau\in S_k^{\text{av}(231)}$. Denote $i_k=\tau^{-1}_k$. Then
\begin{equation}\label{propcondgeneralnot1}
N_n^{(k;\tau)}|\{\sigma_j=n\}\stackrel{\text{dist}}{=}
N_{j-1}^{(k;\tau)}+N_{n-j}^{(k;\tau)}+1_{\{j=n-k+i_k\}}1_{A^{l}_{n-k+i_k-1}}1_{\sigma^{*,k-i_k}},
\end{equation}
where
\begin{equation}\label{sigma*first}
\sigma^{*,k-i_k}=\text{red}(\tau_{i_k+1}\tau_{i_k+2}\cdots\tau_k)\in S_{k-i_k}^{\text{av}(231)}
\end{equation}
 and
\begin{equation}\label{propAl}
A^{l}_{n-k+i_k-1}=\{\sigma\in S_{n-k+i_k-1}^{\text{av}(231)}: \sigma_{n-k+l}=\tau_l+n-k,\ l=1\cdots i_k-1\},
\end{equation}
and where $N_{j-1}^{(k,\tau)}$ is independent of $N_{n-j}^{(k,\tau)}$, and the pair $N_{n-k+i_k-1}^{(k,\tau)}, 1_{A^{l}_{n-k+i_k-1}}$ is independent of the pair $N_{k-i_k}^{(k,\tau)},1_{\sigma^{*,k-i_k}}$.
If $i_k=k$, then we understand $1_{\sigma^{*,k-i_k}}$ to be the constant 1, and if $i_k=1$, then we understand $1_{A^{l}_{n-k+i_k-1}}$ to be the constant 1.

\noindent ii. Let $\tau=k\thinspace k-1\thinspace\cdots1$. Then
\begin{equation}\label{propcondgeneral1mon}
N_n^{(k;\tau)}|\{\sigma_j=n\}\stackrel{\text{dist}}{=}
N_{j-1}^{(k;\tau)}+N_{n-j}^{(k;\tau)}+1_{\{j\le n-k+1\}}1_{A^{r}_{n-j}},
\end{equation}
where
\begin{equation}\label{propAr}
A^{r}_{n-j}=\{\sigma\in S_{n-j}:\sigma_l=n-j+1-l,\ l=1,\cdots, k-1\},
\end{equation}
and where $N_{j-1}^{(k,\tau)}$ is independent of the pair $N_{n-j}^{(k,\tau)},1_{A_{n-j}}$.
\end{proposition}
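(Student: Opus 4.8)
The plan is to condition on the position $j$ of the value $n$ and exploit the structural decomposition recalled in the second paragraph of this section: under $P_n^{\text{av}(231)}\mid\{\sigma_j=n\}$, the left block $\sigma_1\cdots\sigma_{j-1}$ is distributed as $P_{j-1}^{\text{av}(231)}$ on the value set $\{1,\dots,j-1\}$, the reduction $\text{red}(\sigma_{j+1}\cdots\sigma_n)$ is distributed as $P_{n-j}^{\text{av}(231)}$, and the two are independent. First I would partition every cluster of $k$ consecutive numbers in $k$ consecutive positions according to whether its block of positions lies entirely in $\{1,\dots,j-1\}$, entirely in $\{j+1,\dots,n\}$, or contains the position $j$. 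A block of $k$ consecutive positions cannot meet both sides of $j$ without containing $j$, and position $j$ holds the value $n$; hence there is at most one cluster of the third type, and when present it necessarily uses the value set $\{n-k+1,\dots,n\}$. Clusters of the first type are counted exactly by $N_{j-1}^{(k;\tau)}$ applied to the left block, and clusters of the second type by $N_{n-j}^{(k;\tau)}$ applied to the reduced right block, since reduction preserves consecutiveness of both positions and values and therefore the pattern of a cluster. This produces the first two summands in each of \eqref{propcondgeneralnot1} and \eqref{propcondgeneral1mon}, with the required independence inherited from the independence of the two blocks.

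The heart of the argument is to decide exactly when the unique cluster containing $n$ is present with pattern $\tau$. If it exists, then $n$ occupies the slot of the largest entry of $\tau$, namely the $i_k$-th slot where $i_k=\tau^{-1}_k$, so the cluster occupies positions $j-i_k+1,\dots,j-i_k+k$. Of these, the $i_k-1$ positions left of $j$ lie in the left block and must carry cluster values $<j$, while the $k-i_k$ positions right of $j$ lie in the right block and must carry cluster values in $\{j,\dots,n-1\}$. Counting how many of the cluster values $\{n-k+1,\dots,n-1\}$ fall below $j$ forces $j=n-k+i_k$ whenever $i_k\ge2$. I would also use the elementary fact that, since $\tau$ is $231$-avoiding with its maximum at position $i_k$, its first $i_k-1$ entries form a permutation of $\{1,\dots,i_k-1\}$; this makes the required left-block configuration exactly the event $A^{l}_{n-k+i_k-1}$ of \eqref{propAl} (the top $i_k-1$ values of the left block occupy its last $i_k-1$ positions in the order prescribed by $\tau$), and the required right-block configuration exactly $\text{red}(\sigma_{j+1}\cdots\sigma_n)=\sigma^{*,k-i_k}$ of \eqref{sigma*first}. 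Since $(N_{j-1}^{(k;\tau)},1_{A^{l}_{n-k+i_k-1}})$ is a function of the left block while $(N_{n-j}^{(k;\tau)},1_{\sigma^{*,k-i_k}})$ is a function of the reduced right block, their independence is precisely the independence of the two blocks, and the product indicator $1_{A^{l}_{n-k+i_k-1}}1_{\sigma^{*,k-i_k}}$ gives \eqref{propcondgeneralnot1}; the conventions for $i_k=1$ and $i_k=k$ correspond to the degenerate empty blocks.

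The main obstacle, and the point where parts i and ii diverge, is the borderline case $i_k=1$, where $n$ sits at the front of the cluster and the counting argument yields only $j\le n-k+1$ rather than a single value. Here I would prove a short $231$-avoidance lemma: if $\rho\in S_s^{\text{av}(231)}$ with $s>t\ge1$ has its top $t$ values $\{s-t+1,\dots,s\}$ occupying its first $t$ positions, then these values appear in strictly decreasing order. The proof is by induction on $t$ using the decomposition of a $231$-avoiding permutation about its maximum: the maximum $s$ must sit at position $1$, since otherwise the values preceding it would be $\{1,\dots,p-1\}$, which cannot be top values when $s>t$; one then reduces and applies the inductive hypothesis. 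Applying this with $t=k-1$ and $s=n-j$ shows that whenever $j<n-k+1$ the cluster containing $n$ is forced to be decreasing. Consequently, for $\tau\neq k\,k-1\cdots1$ the only admissible position is $j=n-k+1=n-k+i_k$, completing part i; whereas for $\tau=k\,k-1\cdots1$ every $j\le n-k+1$ is admissible, and the cluster is present precisely when the reduced right block begins with its top $k-1$ values in decreasing order, that is, on the event $A^{r}_{n-j}$ of \eqref{propAr}. As this event and the left-block count $N_{j-1}^{(k;\tau)}$ depend on disjoint blocks, the independence asserted in part ii follows, yielding \eqref{propcondgeneral1mon}.
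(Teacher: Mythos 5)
Your proof is correct and follows essentially the same route as the paper: condition on the position $j$ of the value $n$, use the decomposition of a $231$-avoiding permutation about its maximum to account for the clusters lying entirely left or right of $j$ and for all the independence claims, and then analyze the unique candidate cluster through $n$. You differ only in two local justifications. To force $j=n-k+i_k$ when $i_k\ge 2$ you count how many of the cluster values must land in the left block, whereas the paper exhibits the forbidden pattern directly (if $j<n-k+i_k$ then $\sigma_{j-1}\sigma_j\sigma_{j+k-i_k+1}$ would form a $231$); and in the case $i_k=1$, $\tau\neq k\,k-1\cdots 1$, you prove an inductive lemma showing that the top $t$ values of a $231$-avoider occupying its first $t$ positions must descend, whereas the paper simply picks an ascent $\sigma_{l_1}<\sigma_{l_2}$ inside the cluster and notes that $\sigma_{l_1}\sigma_{l_2}\sigma_{j+k}$ would be a $231$. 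Both substitutions are valid -- your lemma is essentially the contrapositive of the paper's one-line observation -- so the difference is one of packaging rather than substance.
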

(The superscripts $l$ and $r$ in \eqref{propAl} and \eqref{propAr} are just used to distinguish the two sets, and stand for ``left'' and ``right'', because of how they arise in the proof.)

Let $s^\tau_n=E_n^{\text{av}(231)}N_n^{(k;\tau)}$.
Then $s^\tau_n=0$, for $n=1,\cdots, k-1$. For convenience, define $s^\tau_0=0$. The following result follows easily from Proposition \ref{propcond}.
\begin{proposition}\label{recursionprop}
For $2\le k\le n$,
\begin{equation}\label{snrecur}
s^\tau_n=\begin{cases}2\sum_{j=1}^n\frac{C_{j-1}C_{n-j}}{C_n}s^\tau_{j-1}+\frac{C_{n-k}}{C_n},\ \text{if}\  k\thinspace k-1\thinspace\cdots 1\neq\tau\in S_k^{\text{av}(231)};\\
2\sum_{j=1}^n\frac{C_{j-1}C_{n-j}}{C_n}s^\tau_{j-1}+\frac{C_{n-k+1}}{C_n},\ \text{if}\ \tau=k\thinspace k-1\thinspace\cdots 1.\end{cases}
\end{equation}
\end{proposition}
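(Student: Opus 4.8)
The plan is to condition on the position $j$ of the entry $n$ and apply the law of total expectation. Using \eqref{nprob} to substitute $P_n^{\text{av}(231)}(\sigma_j=n)=\frac{C_{j-1}C_{n-j}}{C_n}$, one has
\[
s^\tau_n = \sum_{j=1}^n \frac{C_{j-1}C_{n-j}}{C_n}\, E_n^{\text{av}(231)}\!\big[N_n^{(k;\tau)}\mid \sigma_j=n\big],
\]
and each conditional expectation is read off from the distributional identities of Proposition \ref{propcond}. In both parts the first two summands $N_{j-1}^{(k;\tau)}$ and $N_{n-j}^{(k;\tau)}$ contribute $s^\tau_{j-1}+s^\tau_{n-j}$ (recall $s^\tau_m=0$ for $m<k$ and $s^\tau_0=0$). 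Substituting $j\mapsto n+1-j$ in the second piece gives $\sum_j \frac{C_{j-1}C_{n-j}}{C_n}s^\tau_{n-j}=\sum_j \frac{C_{j-1}C_{n-j}}{C_n}s^\tau_{j-1}$, so the two pieces together produce exactly the leading term $2\sum_{j=1}^n \frac{C_{j-1}C_{n-j}}{C_n}s^\tau_{j-1}$ of \eqref{snrecur}. It then remains only to evaluate the contribution of the extra indicator term, and this is where the two cases differ.

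For part (i), with $\tau\neq k\,k-1\cdots1$, the extra term in \eqref{propcondgeneralnot1} is supported on the single index $j=n-k+i_k$ (legal since $1\le i_k\le k$). There $j-1=n-k+i_k-1$ and $n-j=k-i_k$, and the stated independence of the pairs $\big(N^{(k;\tau)}_{n-k+i_k-1},1_{A^l}\big)$ and $\big(N^{(k;\tau)}_{k-i_k},1_{\sigma^*}\big)$ lets me factor $E[1_{A^l}1_{\sigma^*}]=P_{n-k+i_k-1}^{\text{av}(231)}(A^l)\cdot\frac1{C_{k-i_k}}$, since $\sigma^{*,k-i_k}$ is one prescribed permutation in $S_{k-i_k}^{\text{av}(231)}$. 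The one genuinely combinatorial step is computing $P_{n-k+i_k-1}^{\text{av}(231)}(A^l)$: by the structural fact of the second paragraph of this section, the entries left of the maximum of a $231$-avoider are precisely the smallest values, so the prescription \eqref{propAl} forces the top $i_k-1$ values into the last $i_k-1$ positions in a fixed (automatically $231$-avoiding) order while the bottom $n-k$ values fill the first $n-k$ positions; as this is a direct sum, the whole permutation avoids $231$ iff its initial block of length $n-k$ does, whence $P(A^l)=C_{n-k}/C_{n-k+i_k-1}$. Multiplying $\frac{C_{n-k+i_k-1}C_{k-i_k}}{C_n}$ by $P(A^l)\cdot\frac1{C_{k-i_k}}$ collapses to $C_{n-k}/C_n$, the claimed second summand (and the conventions for $i_k=1$ and $i_k=k$ are covered by $C_0=1$).

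For part (ii), with $\tau=k\,k-1\cdots1$, the extra term in \eqref{propcondgeneral1mon} is present for every $j\le n-k+1$ and adds $P_{n-j}^{\text{av}(231)}(A^r_{n-j})$ to the conditional expectation. Here \eqref{propAr} fixes the first $k-1$ positions to the top $k-1$ values in decreasing order; since the fixed prefix consists of the largest values, no crossing $231$ pattern can use a prefix entry as its ``$2$,'' so the permutation avoids $231$ iff the complementary block on the remaining $n-j-k+1$ smallest values does, giving $P_{n-j}^{\text{av}(231)}(A^r_{n-j})=C_{n-j-k+1}/C_{n-j}$. Summing, the extra contribution is
\[
\sum_{j=1}^{n-k+1}\frac{C_{j-1}C_{n-j}}{C_n}\cdot\frac{C_{n-j-k+1}}{C_{n-j}}=\frac1{C_n}\sum_{i=0}^{n-k}C_i\,C_{n-k-i}=\frac{C_{n-k+1}}{C_n},
\]
the last equality being the Catalan recursion \eqref{catrecur}, which matches \eqref{snrecur}. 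The main obstacle throughout is the clean evaluation of $P(A^l)$ and $P(A^r)$: once the block decomposition of $231$-avoiding permutations is invoked these reduce to Catalan identities, but correctly identifying the relevant decomposition (a direct sum for the ``left'' event, a skew-type splitting for the ``right'' event) and verifying that no crossing $231$ pattern survives is the step requiring care.
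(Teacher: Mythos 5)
Your proposal is correct and follows essentially the same route as the paper: condition on the position of $n$ via \eqref{nprob}, read off the conditional expectations from Proposition \ref{propcond}, symmetrize to get the factor of $2$, evaluate $P(A^l_{n-k+i_k-1})=C_{n-k}/C_{n-k+i_k-1}$ and $P(A^r_{n-j})=C_{n-j-k+1}/C_{n-j}$, and close the second case with the Catalan recursion \eqref{catrecur}. Your direct-sum justification of the two probabilities is a slightly more explicit version of what the paper compresses into a citation of the structural facts in the second paragraph of Section \ref{thm1proof} (including the observation that $\{\tau_1,\dots,\tau_{i_k-1}\}=\{1,\dots,i_k-1\}$), but the argument is the same.
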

\begin{proof}
By definition, $\tau_{i_k}=k$. Thus, since $\tau\in S_k^{\text{av}(231)}$, it follows from the second paragraph of section \ref{thm1proof} that $\{\tau_1,\cdots, \tau_{i_k-1}\}=\{1,\cdots, i_k-1\}$.
Using this with  the second paragraph of section \ref{thm1proof} and \eqref{propAl},  we obtain the first equality below, and using  the second paragraph of section \ref{thm1proof} and  \eqref{propAr}, we obtain the second equality below:
\begin{equation}\label{probAlAr}
P_n^{\text{av}(231)}(A^l_{n-k+i_k-1})= \frac{C_{n-k}}{C_{n-k+i_k-1}};\ \ P_n^{\text{av}(231)}(A^r_{n-j})=\frac{C_{n-j-k+1}}{C_{n-j}}.
\end{equation}
From \eqref{nprob}, \eqref{propcondgeneralnot1}-\eqref{propAl} and \eqref{probAlAr}, we have
\begin{equation*}
\begin{aligned}
&s^\tau_n=\sum_{j=1}^n\frac{C_{j-1}C_{n-j}}{C_n}(s^\tau_{j-1}+s^\tau_{n-j})+\frac{C_{n-k+i_k-1}C_{k-i_k}}{C_n}\frac{C_{n-k}}{C_{n-k+i_k-1}}\frac1{C_{k-i_k}}=\\
&2\sum_{j=1}^n\frac{C_{j-1}C_{n-j}}{C_n}s^\tau_{j-1}+\frac{C_{n-k}}{C_n},\ \text{for}\ k\thinspace k-1\cdots1\neq\tau\in S_k^{\text{av}(231)},
\end{aligned}
\end{equation*}
and from
\eqref{nprob}, \eqref{propcondgeneral1mon} and \eqref{probAlAr}, we have
\begin{equation*}
\begin{aligned}
&s^\tau_n=\sum_{j=1}^n\frac{C_{j-1}C_{n-j}}{C_n}(s^\tau_{j-1}+s^\tau_{n-j})+\sum_{j=1}^{n-k+1}\frac{C_{j-1}C_{n-j}}{C_n}\frac{C_{n-j-k+1}}{C_{n-j}}=\\
&2\sum_{j=1}^n\frac{C_{j-1}C_{n-j}}{C_n}s^\tau_{j-1}+\frac{C_{n-k+1}}{C_n},\ \text{for}\ \tau=k\thinspace k-1\cdots1,
\end{aligned}
\end{equation*}
where the last equality follows from \eqref{catrecur}.
\end{proof}

Define
\begin{equation}\label{G}
G^{(k,\tau)}(t)=\sum_{j=k}^\infty C_js^\tau_jt^j,
\end{equation}
and let
\begin{equation*}\label{Catalangen}
C(t)=\sum_{j=0}^\infty C_jt^j
\end{equation*}
denote the generating function of the Catalan numbers.
As is well-known \cite{P14},
\begin{equation}\label{Catalangenexpl}
C(t)=\frac{1-\sqrt{1-4t}}{2t},\ |t|\le \frac14.
\end{equation}
We use Proposition \ref{recursionprop} to prove the following result.
\begin{proposition}\label{propG} For $|t|\le\frac14$,
\begin{equation} \label{formulaG}
G^{(k,\tau)}(t)=\begin{cases}\frac{t^kC(t)}{1-2tC(t)}, \ \text{if}\ k\thinspace k-1\cdots 1\neq\tau\in S_k^{\text{av}(231)};\\
\frac{t^{k-1}(C(t)-1)}{1-2tC(t)}, \ \text{if}\ \tau=k\thinspace k-1\cdots 1.\end{cases}
\end{equation}
\end{proposition}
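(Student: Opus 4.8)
The plan is to convert the linear recursion \eqref{snrecur} of Proposition \ref{recursionprop} into an algebraic identity for the generating function \eqref{G} by the standard device of multiplying through by $C_nt^n$ and summing over $n$. Concretely, rewrite \eqref{snrecur} in the cleared form
\[
C_ns^\tau_n=2\sum_{j=1}^nC_{j-1}C_{n-j}s^\tau_{j-1}+c_n,\qquad c_n=\begin{cases}C_{n-k},&k\,k-1\cdots1\neq\tau,\\ C_{n-k+1},&\tau=k\,k-1\cdots1,\end{cases}
\]
where I adopt the convention $c_n=0$ for $n<k$. Since $s^\tau_n=0$ for $n<k$, both $C_ns^\tau_n$ and the inner double sum vanish for $n<k$ (the latter because it only involves $s^\tau_i$ with $i\le n-1<k$), so the cleared identity holds for \emph{all} $n\ge0$ and the convention on $c_n$ is consistent. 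Multiplying by $t^n$ and summing over $n\ge0$ then produces $G^{(k,\tau)}(t)=\sum_{n\ge0}C_ns^\tau_nt^n$ on the left.

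The key step is to recognize the double sum as a convolution. Substituting $i=j-1$ turns $\sum_{j=1}^nC_{j-1}C_{n-j}s^\tau_{j-1}$ into $\sum_{i=0}^{n-1}(C_is^\tau_i)C_{(n-1)-i}$, which is the $(n-1)$st coefficient of the product $G^{(k,\tau)}(t)\,C(t)$, with $C(t)$ the Catalan generating function. Summing against $t^n$ shifts the index by one and yields the factor $t$, so the first term on the right contributes exactly $2t\,C(t)\,G^{(k,\tau)}(t)$. For the inhomogeneous term I would compute $\sum_{n\ge k}c_nt^n$ in each case: when $\tau\neq k\,k-1\cdots1$ this is $\sum_{n\ge k}C_{n-k}t^n=t^kC(t)$, while when $\tau=k\,k-1\cdots1$ it is $\sum_{n\ge k}C_{n-k+1}t^n=t^{k-1}\sum_{\ell\ge1}C_\ell t^\ell=t^{k-1}\bigl(C(t)-1\bigr)$, the subtracted $1$ coming from the fact that the Catalan index now starts at $\ell=1$ rather than $\ell=0$.

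Assembling these gives $G^{(k,\tau)}(t)=2t\,C(t)\,G^{(k,\tau)}(t)+H(t)$ with $H(t)=t^kC(t)$ or $t^{k-1}(C(t)-1)$, and solving the resulting linear equation yields $G^{(k,\tau)}(t)=H(t)/\bigl(1-2tC(t)\bigr)$, which is precisely \eqref{formulaG}. The manipulations are legitimate at the level of formal power series (the coefficient $1-2tC(t)$ equals $1$ at $t=0$, so inversion is licit), and convergence for $|t|<\tfrac14$ follows from the radius of convergence of $C(t)$ given by \eqref{Catalangenexpl}; indeed \eqref{Catalangenexpl} also shows $1-2tC(t)=\sqrt{1-4t}$, which can be used to present the answer in closed form if desired. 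There is no deep obstacle here: the only thing requiring care is the twofold index bookkeeping — the shift $j\mapsto j-1$ that generates the factor $t$ in the convolution, and the shift in $\sum c_nt^n$ that distinguishes $t^kC(t)$ from $t^{k-1}(C(t)-1)$ — together with the initial check that the inhomogeneous coefficient may be taken to vanish for $n<k$ consistently with $s^\tau_n=0$.
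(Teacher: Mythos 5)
Your proof is correct and takes essentially the same route as the paper: multiply \eqref{snrecur} by $C_nt^n$, sum over $n$, identify the Catalan convolution as $2tC(t)G^{(k,\tau)}(t)$ and the inhomogeneous term as $t^kC(t)$ or $t^{k-1}(C(t)-1)$, then solve the linear equation. The paper states this in two lines; your version merely makes explicit the index shifts and the vanishing of the identity for $n<k$, all of which check out.
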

\begin{proof}
Multiplying the first line of \eqref{snrecur} by $C_nt^n$ and summing over $n$ from $k$ to $\infty$ gives
$$
G^{(k,\tau)}(t)=2tC(t)G^{(k,\tau)}(t)+t^kC(t),\ k\thinspace k-1\cdots 1\neq\tau\in S_k,
$$
and solving above for $G^{(k,\tau)}(t)$ gives  \eqref{formulaG} for $\tau\neq k\thinspace k-1\cdots 1$.
Multiplying the second line of  \eqref{snrecur} by $C_nt^n$ and summing over $n$ from $k$ to $\infty$ gives
$$
G^{(k,\tau)}(t)=2tC(t)G^{(k,\tau)}(t)+t^{k-1}(C(t)-1),\ \tau=k\thinspace k-1\cdots 1,
$$
and solving above for $G^{(k,\tau)}(t)$ gives  \eqref{formulaG} for $\tau=k\thinspace k-1\cdots 1$.

\end{proof}
We now use Proposition \ref{propG} to prove Theorem \ref{nonmonthm}.

\noindent \it Proof of Theorem \ref{nonmonthm}.
\noindent Part (i).\rm\ From  \eqref{Catalangenexpl}, we have
\begin{equation}\label{Cdivision}
\frac{tC(t)}{1-2tC(t)}=\frac12\big((1-4t)^{-\frac12}-1\big).
\end{equation}
A direct calculation \cite[p.41]{P14} reveals that
\begin{equation}\label{nthderiv+12}
\frac{\big((1-4t)^\frac12\big)^{(n)}|_{t=0}}{n!}=-\frac2{2n-1}\binom{2n-1}n,\ n\ge2.
\end{equation}
From \eqref{nthderiv+12} and the fact that  $\big((1-4t)^\frac12\big)'=-2(1-4t)^{-\frac12}$, it follows  that
\begin{equation}\label{nthderiv-12}
\begin{aligned}
&\frac{\big((1-4t)^{-\frac12}\big))^{(n)}|_{t=0}}{n!}=-\frac12\frac{\big((1-4t)^\frac12\big)^{(n+1)}|_{t=0}}{n!}=\\
&-\frac{n+1}2\frac{\big((1-4t)^\frac12\big)^{(n+1)}|_{t=0}}{(n+1)!}=-\frac{n+1}2\Big(-\frac2{2n+1}\binom{2n+1}{n+1}\Big)=\\
&(n+1)C_n,\ n\ge1.
\end{aligned}
\end{equation}
Now \eqref{Cdivision} and \eqref{nthderiv-12} give
\begin{equation}\label{Cdivisionseries}
\frac{tC(t)}{1-2tC(t)}=\sum_{n=1}^\infty \frac{n+1}2C_nt^n.
\end{equation}

Consider first  the case $k\thinspace k-1\cdots 1\neq\tau\in S_k$.
Then from \eqref{formulaG} and \eqref{Cdivisionseries}
\begin{equation}
G^{(k,\tau)}(t)=\sum_{n=1}^\infty \frac{n+1}2C_nt^{n+k-1},
\end{equation}
which along with \eqref{G} gives
\begin{equation}\label{nonmonfinal231}
E_n^{\text{av}(231)}N_n^{(k;\tau)}=s_n^\tau=\frac{(n-k+2)C_{n-k+1}}{2C_n},
\end{equation}
which is \eqref{nonmonformula} for $\tau\neq k\thinspace k-1\cdots1$ and $\eta=231$.

Now consider the case $\tau=k\thinspace k-1\cdots 1$.
From \eqref{Catalangenexpl}, we have
\begin{equation}\label{1overC}
\frac1{1-2tC(t)}= (1-4t)^{-\frac12}.
\end{equation}
Using this with \eqref{nthderiv-12} gives
\begin{equation}\label{1overCwithk}
\frac{t^{k-1}}{1-2tC(t)}=\sum_{n=0}^\infty(n+1)C_nt^{n+k-1}.
\end{equation}
From \eqref{formulaG},  \eqref{Cdivisionseries} and \eqref{1overCwithk}, we obtain
\begin{equation}\label{finalformulathm1}
G^{(k,\tau)}(t)=\frac{t^{k-1}C(t)}{1-2tC(t)}-
\frac{t^{k-1}}{1-2tC(t)}=\sum_{n=1}^\infty\big(\frac{n+2}2C_{n+1}-(n+1)C_n\big)t^{n+k-1}.
\end{equation}
From \eqref{G} and \eqref{finalformulathm1}, we obtain
\begin{equation}\label{monfinal231}
E_n^{\text{av}(231)}N_n^{(k;\tau)}=s_n^\tau=\frac{(n-k+3)C_{n-k+2}}{2C_n}-\frac{(n-k+2)C_{n-k+1}}{C_n},
\end{equation}
which is \eqref{nonmonformula} for $\tau=k\thinspace k-1\cdots1$ and $\eta=231$.
\hfill $\square$

\noindent \it Part (ii).\rm\ From \eqref{nonmonfinal231} and \eqref{monfinal231}, we have
\begin{equation}\label{EN}
\begin{aligned}
&E_n^{\text{av}(231)}N_n^{(k)}=(C_k-1)\frac{(n-k+2)C_{n-k+1}}{2C_n}+\\
&\frac{(n-k+3)C_{n-k+2}}{2C_n}-\frac{(n-k+2)C_{n-k+1}}{C_n}.
\end{aligned}
\end{equation}
Using the formula $C_{m+1}=\frac{2(2m+1)}{m+2}C_m$ to write $C_{n-k+2}$ in terms of $C_{n-k+1}$ in \eqref{EN}, and performing some algebra,
gives
$$
E_n^{\text{av}(231)}N_n^{(k)}=\frac{C_{n-k+1}}{2C_n}\Big((n-k+2)C_k+n-k\Big),
$$
which is \eqref{fullnonmon} for $\eta=231$.
\hfill $\square$

We now return to prove Proposition \ref{propcond}.

\noindent\it Proof of Proposition \ref{propcond}.\rm\
In order to make the proof of the proposition more transparent, we  first prove it for $k=6$ and three particular choices of $\tau$---216345, 621345 and 654321.
The  proofs of these particular cases will make  the general case much easier to follow. Note that the first two cases are particular cases of part (i) and the third case is a particular case of part (ii).

We begin with $\tau=216345$.
Note that for this case, the $i_k$ appearing in part (i) of the proposition is given by $i_6=3$.
For this case, \eqref{propcondgeneralnot1}-\eqref{propAl} become
\begin{equation}\label{cond216345}
N_n^{(6;216345)}|\{\sigma_j=n\}\stackrel{\text{dist}}{=}
N_{j-1}^{(6;216345)}+N_{n-j}^{(6;216345)}+1_{\{j=n-3\}}1_{A^{l}_{n-4}}1_{\sigma^{*,3}},
\end{equation}
where
\begin{equation}\label{sigma*3}
\sigma^{*,3}=123=\text{red}(\tau_4\tau_5\tau_6)\in S_{3}^{\text{av}(231)}
\end{equation}
and
\begin{equation}\label{Alparticular}
A^{l}_{n-4}=\{\sigma\in S_{n-4}^{\text{av}(231)}: \sigma_{n-5}=n-4,\sigma_{n-4}=n-5\},
\end{equation}
and where $N_{j-1}^{(6;216345)}$ is independent of $N_{n-j}^{(6;216345)}$, and the pair $N_{n-4}^{(6;216345)}, 1_{A^{l}_{n-4}}$ is independent of the pair $N_{3}^{(6;216345)},1_{\sigma^{*,3}}$.
We now verify \eqref{cond216345}-\eqref{Alparticular}.
The terms $N_{j-1}^{(6;216345)}$ and $N_{n-j}^{(6;216345)}$  on the right hand side of \eqref{cond216345} are clear; they count the number of clusters of length $k$ and   pattern 216345 from
$\sigma_1\cdots, \sigma_{j-1}$ and from $\sigma_{j+1}\cdots\sigma_n$ respectively. We now show that the term, $1_{\{j=n-3\}}1_{A^{l}_{n-4}}1_{\sigma^{*,3}}$ counts  clusters of length $6$ and pattern 216345 that involve the number $n=\sigma_j$.
Such a  cluster that includes the number $n=\sigma_j$ can only  be the cluster
\begin{equation}\label{cluster6-3}
\begin{aligned}
&\sigma_{j-2}=n-4,\sigma_{j-1}=n-5,\sigma_j=n,\sigma_{j+1}=n-3,\sigma_{j+2}=n-2,\sigma_{j+3}=n-1.
\end{aligned}
\end{equation}
Furthermore, since $\sigma\in S_n^{\text{av}(231)}$, \eqref{cluster6-3} can only occur if $j=n-3$. Indeed,
if $j>n-3$ or $j\le2$, then obviously \eqref{cluster6-3} cannot occur, while
 if $3\le j<n-3$, then it would follow from  \eqref{cluster6-3} that $\sigma_{j+4}\le n-6$. But then $\sigma_{j-1}\sigma_j\sigma_{j+4}$ would have the pattern 231.
  Finally, given that $\sigma_{n-3}=n$,  $\sigma\in S_n^{\text{av}(231)}$ satisfies \eqref{cluster6-3} with $j=n-3$ if and only if
$\sigma_1\cdots\sigma_{n-4}\in A^l_{n-4}$ and $\text{red}(\sigma_{n-2}\sigma_{n-1}\sigma_n)=\sigma^{*,3}$ where
$A^l_{n-4}$ is as in \eqref{Alparticular} and $\sigma^{*,3}$ is as in \eqref{sigma*3}.
The above-noted independence follows from the second paragraph of section \ref{thm1proof}.
In the sequel, we will refrain from commenting on the justification for independence, as it will always follow from the second paragraph of section \ref{thm1proof}.

Consider now the case $\tau=621345$.
Note that in this case, the $i_k$ appearing in part (i) of the proposition is given by $i_6=1$.
For this case, \eqref{propcondgeneralnot1}-\eqref{propAl} become
\begin{equation}\label{cond621345}
N_n^{(6;621345)}|\{\sigma_j=n\}\stackrel{\text{dist}}{=}
N_{j-1}^{(6;621345)}+N_{n-j}^{(6;621345)}+1_{\{j=n-5\}}1_{\sigma^{*,5}},
\end{equation}
where
\begin{equation}\label{sigma*5}
\sigma^{*,5}=21345=\text{red}(\tau_2\cdots\tau_6)
\end{equation}
and where $N_{j-1}^{(6;621345)}$ is independent of $N_{n-j}^{(6;621345)}$, and $N_{n-6}^{(6;621345)}$ is independent of the pair $N_{5}^{(6;621345)},1_{\sigma^{*,5}}$.
We now verify \eqref{cond621345}-\eqref{sigma*5}.
As before, the roles of $N_{j-1}^{(6;621345)}$ and $N_{n-j}^{(6;621345)}$ are clear. We show that the term $1_{\{j=n-5\}}1_{\sigma^{*,5}}$ counts those clusters of length $6$ and pattern 621345 that involve the number $n=\sigma_j$.
Such a cluster that includes the number $n=\sigma_j$ can only  be the cluster
\begin{equation}\label{cluster6-1}
\begin{aligned}
&\sigma_{j}=n,\sigma_{j+1}=n-4,\sigma_{j+2}=n-5,\sigma_{j+3}=n-3,\sigma_{j+4}=n-2,\sigma_{j+5}=n-1.
\end{aligned}
\end{equation}
Furthermore, since $\sigma\in S_n^{\text{av}(231)}$,
this can occur only if  $j=n-5$. Indeed, if $j>n-5$, then obviously \eqref{cluster6-1} cannot occur, while if $j<n-5$, then it would follow from \eqref{cluster6-1} that
$\sigma_{j+6}\le n-6$.
But then, for example, $\sigma_{j+1}\sigma_{j+3}\sigma_{j+6}$ would have the pattern 231.
Finally, given that $\sigma_{n-5}=n$, $\sigma\in S_n^{\text{av}(231)}$ satisfies \eqref{cluster6-1} with $j=n-5$ if and only if   $\text{red}(\sigma_{n-4}\cdots\sigma_n)=\sigma^{*,5}$,
where $\sigma^{*,5}$ is as in \eqref{sigma*5}.

The argument above for $\tau=621345$ works just as well for any other $\tau\in S_6$ with $\tau_1=6$, except for $\tau=654321$, which we now consider.
For this case, \eqref{propcondgeneral1mon}-\eqref{propAr} become
\begin{equation}\label{cond654321}
N_n^{(6;654321)}|\{\sigma_j=n\}\stackrel{\text{dist}}{=}
N_{j-1}^{(6;654321)}+N_{n-j}^{(6;654321)}+1_{\{j\le n-5\}}1_{A^{r}_{n-j}},
\end{equation}
where
\begin{equation}\label{Arparticular}
A^{r}_{n-j}=\{\sigma\in S_{n-j}^{\text{av}(231)}:\sigma_l=n-j+1-l,\ l=1,\cdots, 5\},
\end{equation}
and where $N_{j-1}^{(6;654321)}$ is independent of the pair $N_{n-j}^{(6;654321)},1_{A^{r}_{n-j}}$.
We now verify \eqref{cond654321}-\eqref{Arparticular}.
As before, the roles of $N_{j-1}^{(6;654321)}$ and $N_{n-j}^{(6;654321)}$ are clear.
We now show that the term  $1_{\{j\le n-5\}}1_{A^{r}_{n-j}}$ counts  clusters of length $6$ and pattern $654321$ that involve the number $n=\sigma_j$.
Such a cluster that includes the number $n=\sigma_j$ can only  be the cluster
\begin{equation}\label{cluster6-1again}
\begin{aligned}
&\sigma_{j}=n,\sigma_{j+1}=n-1,\sigma_{j+2}=n-2,\sigma_{j+3}=n-3,\sigma_{j+4}=n-4,\sigma_{j+5}=n-5.
\end{aligned}
\end{equation}
Of course, \eqref{cluster6-1again} cannot occur if $j>n-5$; this accounts for the term $1_{\{j\le n-5\}}$.
In the previous case, we also ruled out $j<n-5$ because that would lead to the existence of the pattern 231.
In the present case, because the pattern in \eqref{cluster6-1again} is  decreasing, the argument in the previous case no longer goes through.
Finally, for $j\le n-5$ and given that $\sigma_j=n$,  $\sigma\in S_n^{\text{av}(231)}$ satisfies \eqref{cluster6-1again} if and only if
$\text{red}(\sigma_{j+1}\cdots \sigma_n)\in A^{r}_{n-j}$, where $A^{r}_{n-j}$ is as in \eqref{Arparticular}.

With the above particular cases explained, we now turn to the proof of the general case.
We first assume that $i_k=\tau_k^{-1}\neq1$.
 We  will show that part (i) of the proposition holds.
The terms $N_{j-1}^{(k,\tau)}$ and $N_{n-j}^{(k;\tau)}$  on the right hand side of \eqref{propcondgeneralnot1} are clear; they count the number of clusters of length $k$ and   pattern $\tau$ from
$\sigma_1\cdots, \sigma_{j-1}$ and from $\sigma_{j+1}\cdots\sigma_n$. We now show that the term $1_{\{j=n-k+i_k\}}1_{A^{l}_{n-k+i_k-1}}1_{\sigma^{*,k-i_k}}$ counts  clusters of length $k$ and pattern $\tau$ that involve the number $n=\sigma_j$.
Of course, the only candidate for such a cluster is $\sigma_{j-i_k+1}\sigma_{j-i_k+2}\cdots\sigma_{j+k-i_k}$, and  this will indeed constitute such a cluster
if and only if
\begin{equation}\label{clustergeneralnot1}
\text{red}(\sigma_{j-i_k+1}\sigma_{j-i_k+2}\cdots\sigma_{j+k-i_k})=\tau.
\end{equation}

We now show that \eqref{clustergeneralnot1} can only occur if $j=n-k+i_k$. This will account for the term $1_{\{j=n-k+i_k\}}$.
Indeed, if $j>n-k+i_k$ or $j\le i_k-1$, then obviously \eqref{clustergeneralnot1} cannot occur.
Now consider  $i_k\le j<n-k+i_k$. Since $\{ \sigma_{j-i_k+1},\sigma_{j-i_k+2},\cdots\sigma_{j+k-i_k}\}=\{n-k+1,\cdots,n\}$, it would follow from
  \eqref{clustergeneralnot1} that
   $\sigma_{j+k-i_k+1}\le n-k$. But then $\sigma_{j-1}\sigma_j\sigma_{j+k-i_k+1}$ would have the pattern 231, which is forbidden.
   Thus, we conclude that there will be either one such cluster involving $n$ or no such cluster involving $n$, and the condition for the existence of such a cluster is that $j=n-k+i_k$ and
\begin{equation}\label{clustergeneralonejnot1}
\text{red}(\sigma_{n-k+1}\cdots\sigma_n)=\tau.
\end{equation}
  Finally, given $\sigma_{n-k+i_k}=n$,  $\sigma\in S_n^{\text{av}(231)}$ satisfies \eqref{clustergeneralonejnot1}  if and only if
$\sigma_1\cdots\sigma_{n-k+i_k-1}\in A^l_{n-k+i_k-1}$, where $A^l_{n-k+i_k-1}$ is as in \eqref{propAl}  and, if $i_k\neq k$, then also $\text{red}(\sigma_{n-k+i_k+1}\cdots\sigma_n)=\sigma^{*,k-i_k}$,
where $\sigma^{*,k-i_k}$ is as in \eqref{sigma*first}.

We now consider the case that $i_k=\tau_k^{-1}=1$.
Here we need to consider two subcases---the case that $\tau\neq kk-1\cdots 1$, and the case that $\tau=kk-1\cdots 1$.
We first consider the subcase that  $\tau\neq kk-1\cdots 1$.
We will show that part (i) of the proposition holds.
Again, the roles of $N_{j-1}^{(k;\tau)}$ and $N_{n-j}^{(k;\tau)}$ are clear.
We now show that the term $1_{\{j=n-k+1\}}1_{\sigma^{*,k-1}}$ counts  clusters of length $k$ and pattern $\tau$ that involve the number $n=\sigma_j$.
Of course, the only candidate for such a cluster is $\sigma_{j}\sigma_{j+1}\cdots\sigma_{j+k-1}$, and  this will indeed constitute such a cluster
if and only if
\begin{equation}\label{clustergeneral1notmon}
\text{red}(\sigma_{j}\sigma_{j+1}\cdots\sigma_{j+k-1})=\tau.
\end{equation}
We now show that \eqref{clustergeneral1notmon} can only occur if $j=n-k+1$. This will account for the term $1_{\{j=n-k+1\}}$.
Indeed, if $j>n-k+1$, then obviously \eqref{clustergeneral1notmon} cannot occur.
Now consider  $j<n-k+1$. Since $\{ \sigma_{j},\sigma_{j+1},\cdots\sigma_{j+k-1}\}=\{n-k+1,\cdots,n\}$, it would follow from
  \eqref{clustergeneral1notmon} that
   $\sigma_{j+k}\le n-k$. Also, since by assumption, $\tau_2\cdots \tau_k\neq k-1\cdots1$,
   it follows from \eqref{clustergeneral1notmon} that
   there exist indices $l_1<l_2$ from the set $\{j+1,\cdots,j+k-1\}$ such that
   $\sigma_{l_1}<\sigma_{l_2}$. But then $\sigma_{l_1}\sigma_{l_2}\sigma_{j+k}$ would have the pattern 231, which is forbidden.
   Thus, we conclude that there will be either one such cluster involving $n$ or no such cluster involving $n$, and the condition for the existence of such a cluster is $j=n-k+1$ and
\begin{equation}\label{clustergeneralonej1notmon}
\text{red}(\sigma_{n-k+1}\cdots\sigma_n)=\tau.
\end{equation}
 Finally, given  $\sigma_{n-k+1}=n$, $\sigma\in S_n^{\text{av}(231)}$ satisfies \eqref{clustergeneralonej1notmon}  if and only if
  $\text{red}(\sigma_{n-k+2}\cdots\sigma_n)=\sigma^{*,k-1}$, where $\sigma^{*,k-1}$ is as in \eqref{sigma*first} with $i_k=1$.

We now turn to the subcase $\tau=k\thinspace k-1\cdots 1$ of the case $i_k=\tau_k^{-1}=1$. We will show that
part (ii) of the proposition holds.
Again, the roles of $N_{j-1}^{(k;\tau)}$ and $N_{n-j}^{(k;\tau)}$ are clear.
We now show that the term  $1_{\{j\le n-k+1\}}1_{A^r_{n-j}}$ counts  clusters of length $k$ and pattern $\tau$ that involve the number $n=\sigma_j$.
Of course, the only candidate for such a cluster is $\sigma_{j}\sigma_{j+1}\cdots\sigma_{j+k-1}$, and  this will indeed constitute such a cluster
if and only if
\begin{equation}\label{clustergeneral1mon}
\text{red}(\sigma_{j}\sigma_{j+1}\cdots\sigma_{j+k-1})=\tau=k\cdots1.
\end{equation}
Of course, \eqref{clustergeneral1mon} cannot occur if $j>n-k+1$; this accounts for the term $1_{\{j\le n-k+1\}}$.
Finally, given that $\sigma_j=n$, it follows that for $1\le j\le n-k+1$,   $\sigma\in S_n^{\text{av}(231)}$ satisfies \eqref{clustergeneral1mon} if and only if
$\text{red}(\sigma_{j+1}\cdots \sigma_n)\in A^r_{n-j}$, where $A_{n-j}^r$ is as in \eqref{propAr}.
\hfill$\square$

\section{Proof of Proposition \ref{nonmonpropvar}}\label{varproof}
As noted before the statement of Theorem \ref{nonmonthm}, it suffices to consider the case $\eta=231$. Let $2\le k\le n$.
We will prove the proposition for the case that $\tau\in S_k^{\text{av}(231)}$ satisfies $\tau\neq k\cdots1$.
The proof uses part (i) of Proposition \ref{propcond}.
The case $\tau=k\cdots1$ is treated similarly, using part (ii) of that proposition.
From part (i) of  Proposition \ref{propcond}, it follows that for $2\le k\le n$,
\begin{equation}\label{squaredequ}
\begin{aligned}
&(N_n^{(k;\tau)})^2|\{\sigma_j=n\}\stackrel{\text{dist}}{=}(N_{j-1}^{(k;\tau)})^2+(N_{n-j}^{(k;\tau)})^2+2N_{j-1}^{(k;\tau)}N_{n-j}^{(k;\tau)},\ j\neq n-k+i_k;\\
&(N_n^{(k;\tau)})^2|\{\sigma_j=n\}\stackrel{\text{dist}}{=}(N_{j-1}^{(k;\tau)})^2+(N_{n-j}^{(k;\tau)})^2+2N_{j-1}^{(k;\tau)}N_{n-j}^{(k;\tau)}+\\
&2\big(N_{j-1}^{(k;\tau)}+N_{n-j}^{(k;\tau)}\big)1_{A^{l}_{n-k+i_k-1}}1_{\sigma^{*,k-i_k}}+1_{A^{l}_{n-k+i_k-1}}1_{\sigma^{*,k-i_k}},\ j=n-k+i_k,
\end{aligned}
\end{equation}
where the notation in \eqref{squaredequ} is as in Proposition \ref{propcond}.
Let $r_n^\tau=E_n^{\text{av}(231)}(N_n^{(k;\tau)})^2$ and recall from section \ref{thm1proof}
that $s_n^\tau=E_n^{\text{av}(231)}N_n^{(k;\tau)}$.
From \eqref{squaredequ} and \eqref{nprob}, along with the independence of certain random variables as indicated  in Proposition \ref{propcond}, it follows
that
\begin{equation*}
\begin{aligned}
&r_n^\tau=2\sum_{j=1}^n\frac{C_{j-1}C_{n-j}}{C_n}r^\tau_{j-1}+2\sum_{j=1}^n\frac{C_{j-1}C_{n-j}}{C_n}s_{j-1}^\tau s_{n-j}^\tau+\\
&\frac{C_{n-k+i_k-1}C_{k-i_k}}{C_n}\big(2E_{n-k+i_k-1}^{\text{av}(231)}N_{n-k+i_k-1}^{(k;\tau)}1_{A^{l}_{n-k+i_k-1}}\big)P_{k-i_k}^{\text{av}(231)}(\sigma^{*,k-i_k})+\\
&\frac{C_{n-k+i_k-1}C_{k-i_k}}{C_n}P_{n-k+i_k-1}^{\text{av}(231)}(A_{n-k+i_k-1}^l)
P_{k-i_k}^{\text{av}(231)}(\sigma^{*,k-i_k}),
\end{aligned}
\end{equation*}
 where we have used the fact that $N_n^{k;\tau}=0$, for $n<k$.
 Using the fact that $P_{k-i_k}^{\text{av}(231)}(\sigma^{*,k-i_k})=
 \frac1{C_{k-i_k}}$ along with
 \eqref{probAlAr}, we can rewrite the above equation as
\begin{equation}\label{rn}
\begin{aligned}
&r_n^\tau=2\sum_{j=1}^n\frac{C_{j-1}C_{n-j}}{C_n}r^\tau_{j-1}+2\sum_{j=1}^n\frac{C_{j-1}C_{n-j}}{C_n}s_{j-1}^\tau s_{n-j}^\tau+\\
&2\frac{C_{n-k+i_k-1}}{C_n}E_{n-k+i_k-1}^{\text{av}(231)}N_{n-k+i_k-1}^{(k;\tau)}1_{A^{l}_{n-k+i_k-1}}+
\frac{C_{n-k}}{C_n}.
\end{aligned}
\end{equation}

Define
$$
W(t)=\sum_{n=k}^\infty C_{n-k+i_k-1}\big(E_{n-k+i_k-1}^{\text{av}(231)}N_{n-k+i_k-1}^{(k;\tau)}1_{A^{l}_{n-k+i_k-1}}\big)t^n.
$$
Note that
\begin{equation}\label{west}
E_{n-k+i_k-1}^{\text{av}(231)}N_{n-k+i_k-1}^{(k;\tau)}1_{A^{l}_{n-k+i_k-1}}\le E_{n-k+i_k-1}^{\text{av}(231)}N_{n-k+i_k-1}^{(k;\tau)}=s_{n-k+i_k-1}^\tau.
\end{equation}
Recalling the definition of $G^{(k;\tau)}(t)$ in \eqref{G}, we note for later use that  for each $n$,
the coefficient of $t^n$ in the power series defining $W(t)$ is less than or equal to the coefficient of $t^n$ in the power series for
$G^{(k;\tau)}(t)$.
This  follows
from \eqref{west}.
Define
$$
R^{(k;\tau)}(t)=\sum_{n=k}^\infty C_nr_n^\tau t^n.
$$
Multiplying \eqref{rn} by $C_nt^n$ and summing over $n$ from $k$ to $\infty$, and recalling the definition of $G^{(k;\tau)}(t)$, we obtain
\begin{equation*}
R^{(k;\tau)}(t)=2tC(t)R^{(k;\tau)}(t)+2t(G^{(k;\tau)}(t))^2+2t^{k-i_k+1}W(t)+t^kC(t),
\end{equation*}
from which it follows that
\begin{equation}\label{Requ}
R^{(k;\tau)}(t)=\frac{2t(G^{(k;\tau)}(t))^2+2t^{k-i_k+1}W(t)+t^kC(t)}{1-2tC(t)}.
\end{equation}

Using \eqref{Catalangenexpl} and  \eqref{formulaG}, we obtain after some algebra,
\begin{equation}\label{leadterm}
\frac{2t(G^{(k;\tau)}(t))^2}{1-2tC(t)}=(t^{2k-1}-2t^{2k})(1-4t)^{-\frac32}-t^{2k-1}(1-4t)^{-1}.
\end{equation}
Also,
\begin{equation}\label{anotherterm}
\frac{t^kC(t)}{1-2tC(t)}=\frac12t^{k-1}(1-4t)^{-\frac12}-\frac12t^{k-1}.
\end{equation}
From \eqref{nthderiv-12}, we have
\begin{equation}\label{series-onehalf}
(1-4t)^{-\frac12}=\sum_{n=0}^\infty (n+1)C_nt^n.
\end{equation}
Differentiating \eqref{series-onehalf}, we obtain
\begin{equation}\label{series-threehalves}
(1-4t)^{-\frac32}=\frac12\sum_{n=0}^\infty(n+1)(n+2)C_{n+1}t^n.
\end{equation}
Since
$\frac{t^{k-i_k+1}W(t)}{1-2tC(t)}=t^{k-i_k+1}W(t)(1-4t)^{-\frac12}$, and since the coefficients of the power series for $W(t)$ are all nonnegative and are dominated by those of the power series
for $G^{(k;\tau)}(t)$,
it follows  from \eqref{series-onehalf} that
\begin{equation}\label{verbalequ}
\begin{aligned}
&\text{the coefficients of the power series for}\ \frac{t^{k-i_k+1}W(t)}{1-2tC(t)}\ \text{are dominated by those}\\
&\text{for the power series}\
\frac{t^{k-i_k+1}G^{(k;\tau)}(t)}{1-2tC(t)}.
\end{aligned}
\end{equation}
By \eqref{Catalangenexpl} and \eqref{formulaG},
\begin{equation}\label{upperbdseries}
\frac{t^{k-i_k+1}G^{(k;\tau)}(t)}{1-2tC(t)}=\frac12t^{2k-i_k}\big((1-4t)^{-1}-(1-4t)^{-\frac12}\big).
\end{equation}

Since $C_n\sim\frac{4^n}{\sqrt\pi n^\frac32}$, it follows from \eqref{Requ}-\eqref{upperbdseries} that
the leading order term as $n\to\infty$ of the coefficient $C_nr_n^\tau$ of $t^n$ in the power series for $R^{(k;\tau)}(t)$  comes from the coefficient of $t^n$ in the power series for
$(t^{2k-1}-2t^{2k})(1-4t)^{-\frac32}$ on the right hand side of \eqref{leadterm}. Using this with \eqref{series-threehalves}, we obtain
\begin{equation}\label{cnrn}
C_nr_n^\tau\sim \frac12(n-2k+2)(n-2k+3)C_{n-2k+2}-(n-2k+1)(n-2k+2)C_{n-2k+1}.
\end{equation}
Since $\lim_{n\to\infty}\frac{C_{n-1}}{C_n}=\frac14$, it follows from \eqref{cnrn} that
\begin{equation}\label{rnasymp}
E_n^{\text{av}(231)}(N_n^{(k;\tau)})^2=r_n^\tau\sim n^2\big(\frac124^{2-2k}-4^{1-2k}\big)=\frac{n^2}{4^{2k-1}}.
\end{equation}
From \eqref{asymplim}, we have
\begin{equation}\label{firstmoment}
E_n^{\text{av}(231)}N_n^{(k;\tau)}\sim \frac n{2\cdot4^{k-1}} .
\end{equation}
Thus, it follows from \eqref{rnasymp} and \eqref{firstmoment} that
$$
\text{Var}(N_n^{(k;\tau)})=o(n^2).
$$
\hfill $\square$
\section{Proof of Theorem \ref{monthm}}\label{thm2proof}
As noted in the introduction, it suffices to consider the case $\eta=321$.
The key step to proving the theorem is the following result, whose rather technical proof will be postponed until after the proof of Theorem \ref{monthm}.
Recall from the introduction that  $A_{n}^{k;l}\subset S_n$ denotes the event that
 the set of  $k$ consecutive numbers $\{l, l+1,\cdots, l+k-1\}$ appears
in a  set  of consecutive positions. For $\tau\in S_k$, let $A_{n}^{k,\tau;l}\subset S_n$ denote the event that the set of $k$ consecutive numbers $\{l, l+1,\cdots, l+k-1\}$ appears
in a  set  of consecutive positions according to the pattern $\tau$.
\begin{proposition}\label{123321}
 For $n\ge k\ge2$ and $1\le l \le n-k+1$,
 \begin{equation}\label{exact321}
 P_n^{\text{av}(321)}(A_n^{k,\tau;l})=\begin{cases}\frac{C_{l-1}C_{n-k-l+1}}{C_n},\ \text{if}\ 1\cdots k\neq \tau \in S_k^{\text{av}(321)};\\ \frac{C_{n-k+1}}{C_n},\ \text{if}\ \tau=1\cdots k.
 \end{cases}
 \end{equation}
\end{proposition}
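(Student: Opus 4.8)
The plan is to compute the probability by counting, since $P_n^{\text{av}(321)}$ is uniform on a set of size $|S_n^{\text{av}(321)}|=C_n$; thus $P_n^{\text{av}(321)}(A_n^{k,\tau;l})$ equals $\frac1{C_n}$ times the number of $\sigma\in S_n^{\text{av}(321)}$ realizing the event, and the whole problem reduces to counting, split according to whether or not $\tau=1\cdots k$. Two elementary facts about $321$-avoidance will be used throughout: deleting entries from a $321$-avoiding permutation keeps it $321$-avoiding, and in any occurrence of $321$ the three chosen entries are pairwise decreasing in value, so two entries that increase in both position and value can never both belong to a single occurrence of $321$.

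First I would treat the case $1\cdots k\neq\tau\in S_k^{\text{av}(321)}$. Here the point is that $\tau$ has a descent, so within the block there are positions $a<b$ with $\sigma_a>\sigma_b$, both values lying in $\{l,\dots,l+k-1\}$. I would then observe the forcing: any entry of value $>l+k-1$ lying to the left of the block would form a $321$ together with $\sigma_a,\sigma_b$, and any entry of value $<l$ lying to the right of the block would likewise form a $321$ with $\sigma_a,\sigma_b$. Consequently all $l-1$ small values $\{1,\dots,l-1\}$ precede the block and all $n-k-l+1$ large values $\{l+k,\dots,n\}$ follow it; this pins the block to positions $l,l+1,\dots,l+k-1$ and fixes its arrangement to be exactly $\tau$. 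It remains to check that, under this layout, $\sigma$ avoids $321$ if and only if the small part and the large part each avoid $321$: across the three parts both position and value increase, so by the second elementary fact any occurrence of $321$ must live inside a single part. Hence the favorable permutations are counted by $C_{l-1}\cdot 1\cdot C_{n-k-l+1}$, giving the stated probability.

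For the remaining case $\tau=1\cdots k$, the block is an increasing run and the forcing argument disappears, so instead I would set up a contraction bijection. Collapsing the increasing run $l,l+1,\dots,l+k-1$ (occupying $k$ consecutive positions) to a single entry of value $l$, and reducing the surviving values, produces a $\sigma'\in S_{n-k+1}$; conversely, expanding the unique entry of value $l$ in a given $\sigma'\in S_{n-k+1}$ back into such a run recovers $\sigma$. I would claim this is a bijection between $\{\sigma\in S_n^{\text{av}(321)}:A_n^{k,1\cdots k;l}\text{ holds}\}$ and $S_{n-k+1}^{\text{av}(321)}$, which immediately yields the count $C_{n-k+1}$ and the stated probability.

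The main obstacle is verifying that this bijection respects $321$-avoidance, and in particular the expansion direction. The easy half is that deletion preserves avoidance, so $\sigma\in S_n^{\text{av}(321)}$ forces $\sigma'$ to avoid $321$. The delicate half is that expansion cannot create a new $321$: since the run is increasing, a hypothetical occurrence of $321$ in $\sigma$ uses at most one run-entry, and I would replace that entry by the collapsed representative (the entry of value $l$) and argue that an occurrence of $321$ then survives in $\sigma'$. This requires a short case analysis according to whether the run-entry plays the role of the largest, the middle, or the smallest element of the decreasing triple; the key simplification is that the block values $\{l,\dots,l+k-1\}$ are sandwiched, so the two non-run members of the triple are forced to lie either strictly below $l$ or strictly above $l+k-1$, after which the representative of value $l$ slots into the triple in the correct position. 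Carrying out this case check is where the real work lies; once it is settled, the two cases combine to give the formula.
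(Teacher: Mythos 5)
Your proposal is correct and follows essentially the same route as the paper: for $\tau=1\cdots k$ you contract the increasing run to a single entry and establish a bijection with $S_{n-k+1}^{\text{av}(321)}$ (including the key check that re-expansion cannot create a $321$, since any occurrence uses at most one run entry), and for $\tau\neq 1\cdots k$ you use the descent in $\tau$ to force all smaller values before the block and all larger values after it, yielding the product $C_{l-1}C_{n-k-l+1}$. The only cosmetic difference is that the paper phrases the second case through the same contraction/expansion maps (showing the expansion is $321$-avoiding only when the contracted permutation has its value-$l$ entry at position $a=l$ with the two flanks each $321$-avoiding), whereas you argue directly on $\sigma$; the substance is identical.
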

The proof of Theorem \ref{monthm} is almost immediate from Proposition \ref{123321}.

\noindent \it Proof of Theorem \ref{monthm}.\rm\
Since $N_{n}^{(k;\tau)}=\sum_{l=1}^{n-k+1}1_{A_{n}^{k,\tau;l}}$,  Proposition \ref{123321} yields
\begin{equation*}
\begin{aligned}
&E_n^{\text{av}(321)}N_n^{(k,\tau)}=\sum_{l=1}^{n-k+1} P_n^{\text{av}(321)}(A_n^{k,\tau;l})=\begin{cases}\frac{C_{n-k+1}}{C_n},\ \text{if}\ 1\cdots k\neq\tau\in S_k^{\text{av}(321)};\\
\frac{(n-k+1)C_{n-k+1}}{C_n},\ \text{if}\ \tau=1\cdots k,\end{cases}
\end{aligned}
\end{equation*}
where the latter equality in the case $\tau\neq 1\cdots k$ follows from \eqref{catrecur}.
This gives \eqref{monformula} in the case $\eta=321$.
Since there are $C_k-1$ permutations $1\cdots k\neq\tau\in S_k^{\text{av}(321)}$,
\eqref{fullmon} follows from \eqref{monformula}.
\medskip

\noindent \it Proof of Proposition \ref{123321}.\rm\
Fix $n,k,l$ as in the statement of the proposition. Fix $\tau\in S_k^{\text{av}(321)}$.
For $1\le a\le n-k+1$, define
$$
A_n^{k,\tau;l,a}=\big\{\sigma\in A_n^{k,\tau;l}:\{l,l+1,\cdots, l+k-1\}=\{\sigma_a,\sigma_{a+1},\cdots\sigma_{a+k-1}\big\}.
$$
Then the sets $\{A_n^{k,\tau;l,a}\}_{a=1}^{n-k+1}$ are disjoint and
$A_n^{k,\tau;l}=\cup_{a=1}^{n-k+1}A_n^{k,\tau;l;a}$.

If $\nu=\{\nu_i\}_{i=1}^{|B|}$ is a permutation of a finite set  $B\subset\mathbb{N}$, let
$\nu^{B^{-1}}$ denote the permutation it naturally induces on $S_{|B|}$; that is, $\nu^{B^{-1}}=\text{red}(\nu)$, where $\text{red}(\cdot)$ was defined at the beginning of section \ref{thm1proof}.
Conversely, if  $\nu$ is a permutation of $S_{|B|}$, let   $\nu^{B}$ denote
the permutation it naturally induces on $B$.

Until further notice,  consider $a$ fixed.
Let $\sigma\in A_n^{k,\tau;l;a}\cap S_n^{\text{av}(321)}$.
We  describe a procedure    to contract $\sigma$ to a permutation in $S_{n-k+1}^{\text{av}(321)}$.
Define the permutation $\overline\sigma=\overline\sigma(\sigma)$ of the set
$B=\{1,\cdots, l,l+k,\cdots, n\}$  by
$$
\overline\sigma_i=\begin{cases}\sigma_i,\ 1\le i\le a-1;\\ l,\ i=a;\\ \sigma_{i+k-1}, \ i=a+1,\cdots, n-k+1,\end{cases}
$$
and define
$$
\nu=\nu(\sigma)=\overline\sigma^{B^{-1}}.
$$
It  follows from the construction  that
\begin{equation}\label{nu}
\nu\in S_{n-k+1}^{\text{av}(321)} \ \text{and}\ \nu_a=l.
\end{equation}

We concretize the above construction with an example.  Let $n=9, k=3, l=a=4$.
Consider $\sigma=213546897\in A_9^{3,213;4;4}\cap S_9^{\text{av}(321)}$. The set $B$ is given by $B=\{1,2,3,4,7,8,9\}$ and $\bar\sigma=2134897$---the cluster $546$  in $\sigma$ has been
contracted to $4$ in $\bar\sigma$. Finally, $\nu=\nu(\sigma)=\bar\sigma^{B^{-1}}=2134675$ satisfies $\nu\in S_7^{\text{av}(321)}$ and $\nu_4=4$.

Obviously the map taking $\sigma\in A_n^{k,\tau;l;a}\cap S_n^{\text{av}(321)}$ to $\nu(\sigma)$ is not injective.
However,
\begin{equation}\label{partialinject}
\begin{aligned}
&\nu(\sigma)\neq\nu(\sigma'), \ \text{if}\ \sigma,\sigma'\in  A_n^{k,\tau;l;a}\cap S_n^{\text{av}(321)}\ \text{are distinct and satisfy}\\
&\sigma_{a+i}=\sigma'_{a+i}, \ i=0,\cdots, k-1.
\end{aligned}
\end{equation}

Conversely, let $\nu$ satisfy \eqref{nu}. We describe a procedure to extend
$\nu$  to a permutation in $A_n^{k,\tau;l;a}$, which may or may not belong to $S_n^{\text{av}(321)}$.
Let $B=\{1,\cdots, l,l+k,\cdots, n\}$ as above.
Define
 $\sigma^{\tau}=\sigma^{\tau}(\nu)\in S_n$ by
\begin{equation*}\label{nusigma}
\sigma_i^{\tau}=\begin{cases}\nu^B_i,\ i=1,\cdots, a-1;\\ l-1+\tau_{i-a+1},\ i=a,\cdots, a+k-1;\\
\nu^B_{i-k+1},\ i=a+k,\cdots, n.
\end{cases}
\end{equation*}
It follows from the construction that
\begin{equation}\label{sigmatau}
\sigma^\tau(\nu)\in A_n^{k,\tau;l;a}.
\end{equation}
Also, of course, the map taking $\nu$ satisfying \eqref{nu} to $\sigma^\tau(\nu)$ is injective.

As an example of the above construction, again with $n=9, k=3, l=a=4$, consider $\nu=2134675$. Then $\nu$ satisfies  \eqref{nu}.
We have $B=\{1,2,3,4,7,8,9\}$. Choose, for example, $\tau=213\in S_3$. Then $\nu^B=2134897$ and $\sigma^\tau=\sigma^\tau(\nu)=213546897\in A_9^{3,213;4,4}$---the $4$ in
$\nu^B$ has been expanded to the cluster $546$ in $\sigma^\tau$.

We now investigate when in fact $\sigma^\tau(\nu)\in S_n^{\text{av}(321)}$, or equivalently in light of \eqref{sigmatau}, when $\sigma^\tau(\nu)\in S_n^{\text{av}(321)}\cap A_n^{k,\tau;l;a}$.
If $\tau=12\cdots k$, then it is clear that $\sigma^\tau(\nu)\in S_n^{\text{av}(321)}$, for all
$\nu$ satisfying \eqref{nu}.
Thus, since the map taking $\sigma\in A_n^{k,\tau;l;a}\cap S_n^{\text{av}(321)}$ to $\nu(\sigma)$ satisfies \eqref{partialinject}, and the map taking
$\nu$ satisfying \eqref{nu} to $\sigma^\tau(\nu)$ is injective, it follows that
\begin{equation}\label{countfor1tok}
|S_n^{\text{av}(321)}\cap A_n^{k,\tau;l;a}|=|\{\nu\in S_{n-k+1}^{\text{av}(321)}:\nu_a=l\}|,\ \tau=1\cdots k.
\end{equation}

Now consider any of the other $\tau\in S_k^{\text{av}(321)}$.
Since $\tau$ has a decreasing subsequence of length 2, in order
to have $\sigma^\tau(\nu)\in S_n^{\text{av}(321)}$, all of the numbers $\{1,\cdots, l-1\}$ must appear among the first $a-1$ positions of $\nu$, and all the numbers $\{l+1,\cdots n-k+1\}$ must appear
 among the last  $n-a-k+1$ positions of $\nu$. This is possible only if $a=l$.
 If indeed $a=l$, then $\sigma^\tau(\nu)\in S_n^{\text{av}(321)}$ if
 and only if  the first $l-1$ positions of $\nu$ are filled in a 321-avoiding way by the numbers
$\{1,\cdots, l-1\}$  and the last  $n-l-k+1$ positions of $\nu$ are filled in a 321-avoiding
way by the numbers $\{l+1,\cdots n-k+1\}$. (The one remaining position, position $a$, is by assumption filled by the number $l$.)
Thus, again
because the map taking $\sigma\in A_n^{k,\tau;l;a}\cap S_n^{\text{av}(321)}$ to $\nu(\sigma)$ satisfies \eqref{partialinject}, and the map taking
$\nu$ satisfying \eqref{nu} to $\sigma^\tau(\sigma)$ is injective, it follows that
\begin{equation}\label{countfornot1tok}
|S_n^{\text{av}(321)}\cap A_n^{k,\tau;l;a}|=\begin{cases} C_{l-1}C_{n-l-k+1},\ a=l;\\ 0,\ a\neq l.\end{cases}\ \text{for}\ 1\cdots k\neq\tau\in S_k^{\text{av}(321)}.
\end{equation}

Summing \eqref{countfor1tok} and \eqref{countfornot1tok} over $a\in\{1,\cdots, n-k+1\}$, we obtain
\begin{equation}\label{almostfinalstep}
|S_n^{\text{av}(321)}\cap A_n^{k,\tau;l}|=\begin{cases} C_{n-k+1},\ \tau=1\cdots k;\\ C_{l-1}C_{n-l-k+1},\ 1\cdots k\neq\tau\in S_k^{\text{av}(321)}.\end{cases}
\end{equation}
From \eqref{almostfinalstep}, we conclude that \eqref{exact321} holds.
\hfill $\square$

\section{Proof of Theorem \ref{multipatternthm}}\label{thm3proof}
With just one change, we follow the construction appearing in the  proof of Proposition \ref{123321} above, from the beginning of the proof  up until but not including the paragraph containing
\eqref{countfor1tok}. The one change is that wherever $S_m^{\text{av}(321)}$ appears, for some $m\in\mathbb{N}$, it needs to be replaced by
$S_m^{\text{av}(\eta_1,\cdots,\eta_r)}$. (Therefore, the two examples appearing in the construction also needed to be amended.) Thus, in the sequel,  whenever we refer to equations appearing in the above noted construction,
any appearance of   $S_m^{\text{av}(321)}$ in such an equation
 must be changed to $S_m^{\text{av}(\eta_1,\cdots,\eta_r)}$. As in the proof of Proposition \ref{123321},
we now investigate when in fact $\sigma^\tau(\nu)\in S_n^{\text{av}(\eta_1,\cdots,\eta_r)}$, or equivalently
in light of \eqref{sigmatau}, when
$\sigma^\tau(\nu)\in S_n^{\text{av}(\eta_1,\cdots,\eta_r)}\cap A_n^{k,\tau;l;a}$.
In fact, this holds for all $\tau\in S_k^{\text{av}(\eta_1,\cdots,\eta_r)}$. Indeed, since by \eqref{nu},
$\nu\in S_{n-k+1}^{\text{av}(\eta_1,\cdots,\eta_r)}$, and since $\tau\in S_k^{\text{av}(\eta_1,\cdots,\eta_r)}$,  it follows from the definition of
$\sigma^\tau(\nu)$ that if $\sigma^\tau(\nu)\not\in S_n^{\text{av}(\eta_1,\cdots,\eta_r)}$, then  for some $i\in [r]$ and some $2\le j_0\le k-1$,
$\sigma^\tau(\nu)$ contains the pattern $\eta_i$ and  exactly $j_0$ of the numbers in
$\{\sigma^\tau_a(\nu),\cdots, \sigma_{a+k-1}^\tau(\nu)\}=\{l,\cdots, l+k-1\}$ are used in the construction of the pattern $\eta_i$.
But then it would follow that $\eta_i$ has a nontrivial block of length $j_0$, which contradicts the assumption that $\eta_i$ is simple.

Since for all $\tau\in S_k^{\text{av}(\eta_1,\cdots,\eta_r)}$, we have $\sigma^\tau(\nu)\in S_n^{\text{av}(\eta_1,\cdots,\eta_r)}\cap A_n^{k,\tau;l;a}$,
and since the map taking $\sigma\in A_n^{k,\tau;l;a}\cap S_n^{\text{av}(\eta_1,\cdots,\eta_r)}$ to $\nu(\sigma)$ satisfies \eqref{partialinject}, and the map taking
$\nu$ satisfying \eqref{nu} to $\sigma^\tau(\nu)$ is injective, it follows that
\begin{equation}\label{alltauequal}
|S_n^{\text{av}(\eta_1,\cdots,\eta_r)}\cap A_n^{k,\tau;l;a}|=|\{\nu\in S_{n-k+1}^{\text{av}(\eta_1,\cdots,\eta_r)}:\nu_a=l\}|.
\end{equation}
Summing \eqref{alltauequal} over   $a\in\{1,\cdots, n-k+1\}$, we obtain
\begin{equation}\label{afinal}
|S_n^{\text{av}(\eta_1,\cdots,\eta_r)}\cap A_n^{k,\tau;l}|=|S_{n-k+1}^{\text{av}(\eta_1,\cdots,\eta_r)}|.
\end{equation}
From \eqref{afinal}, we obtain
\begin{equation}\label{afinalagain}
P_n^{\text{av}(\eta_1,\cdots,\eta_r)}(A_n^{k,\tau;l})=\frac{|S_{n-k+1}^{\text{av}(\eta_1,\cdots,\eta_r)}|}
{|S_n^{\text{av}(\eta_1,\cdots,\eta_r)}|}.
\end{equation}
(We note that this construction leading to \eqref{afinalagain} is similar  to a construction in  \cite{AA}.)
Since
$$
E_n^{\text{av}(\eta_1,\cdots,\eta_r)}N^{(k;\tau)}=\sum_{l=1}^{n-k+1}P_n^{\text{av}(\eta_1,\cdots,\eta_r)}(A_n^{k,\tau;l}),
$$
 \eqref{multipattern} follows from \eqref{afinalagain}, and then \eqref{multipatternall} follows by summing \eqref{multipattern} over
 $\tau\in S_k^{\text{av}(\eta_1,\cdots,\eta_r)}$.
 \hfill $\square$

\end{document}